\def\R{\mathbb{R}}
\def\H{\mathbf{H}}
\def\T{\mathcal{T}}
\def\A{\mathbf{A}}
\def\B{\mathbf{B}}
\def\D{\mathcal{D}}
\newtheorem{theorem}{Theorem}[section]
\newtheorem{lemma}[theorem]{Lemma}
\newtheorem{definition}[theorem]{Definition}
\newenvironment{proof}[1][Proof]{\begin{trivlist}
\item[\hskip \labelsep {\bfseries #1}]}{\end{trivlist}}
\newcommand{\qed}{\nobreak \ifvmode \relax \else
      \ifdim\lastskip<1.5em \hskip-\lastskip
      \hskip1.5em plus0em minus0.5em \fi \nobreak
      \vrule height0.75em width0.5em depth0.25em\fi}
\begin{document}

\title{A discrete uniformization theorem
for polyhedral surfaces}

\author{ Xianfeng Gu, Feng Luo, Jian Sun, Tianqi Wu}

\date{}

\maketitle

\begin{abstract} A discrete conformality for
polyhedral metrics on surfaces is introduced in this paper which
generalizes earlier work on the subject. 
 It is shown that each polyhedral metric on a surface is 
  discrete
conformal to a constant curvature polyhedral metric which is
unique up to scaling. Furthermore, the constant curvature metric
can be found using a discrete Yamabe flow with surgery.
\end{abstract}

\section{Introduction}
\subsection{Statement of results} The Poincare-Koebe uniformization theorem for Riemann
surfaces is a pillar in the last century mathematics. It states
that given any Riemannian metric on a connected surface, there
exists a complete constant curvature Riemannian metric conformal
to the given one. Furthermore, the complete metric of curvature -1
is unique unless the underlying Riemann surface is biholomorphic
to the Riemann sphere, a torus, or the punctured plane. The
uniformization theorem has a wide range of applications within and
outside mathematics. There have been much work on establishing
various discrete versions of the uniformization theorem for
discrete or polyhedral surfaces. A key step in discretization is
to define the concept of discrete conformality. The most prominent
one is probably Thurston's circle packing theory. The purpose of
this paper is to introduce a discrete conformality for polyhedral
metrics and discrete Riemann surfaces and establish a discrete
uniformization theorem within the category of polyhedral metrics
(PL metrics) on compact surfaces.

Polyhedral surfaces are ubiquitous in computer graphics and many
fields of sciences nowadays. Organizing polyhedral surfaces
according to their conformal classes is a very useful and
important principle. However, to decide if two polyhedral surfaces
are conformal in the classical (Riemannian)
sense is highly non-trivial and time consuming. 
The discrete conformality introduced in this paper overcomes this
computational difficulty. 

Given a closed surface $S$ and a finite non-empty set $V \subset
S$, we call $(S, V)$ a \it marked surface\rm.    The objects of
our investigation are \it polyhedral metrics \rm (or simply PL
metrics) on surfaces.  By definition, a PL metric on $(S, V)$ is a
flat cone metric on $S$ whose cone points are in $V$. For
instance, the boundary of a tetrahedron in the 3-space is a PL
metric on the 2-sphere with 4 cone points. The norms of
holomorphic quadratic differentials on Riemann surfaces are other
examples of PL metrics. The \it discrete curvature \rm of a PL
metric on $(S,V)$ is the function on $V$ sending a vertex $v \in
V$ to $2\pi$ less the cone angle at $v$. A triangulation $\T$ of
$S$ with vertex set $V$ is called a \it triangulation \rm of
$(S,V)$.
Each PL metric $d$ on $(S, V)$ has a \it Delaunay triangulation
$\T(d)$ \rm of $(S,V)$ so that each triangle in $\T(d)$ is
Euclidean and the sum of two angles facing each edge is at most
$\pi$.


\begin{definition} \label{dc}(Discrete conformality and discrete Riemann surface)
 Two PL metrics $d, d'$ on $(S,V)$ are discrete conformal  if there exist sequences
 of PL
metrics $d_1=d, ..., d_m =d'$ on $(S, V)$ and triangulations
$\T_1, ..., \T_m$ of $(S, V)$  satisfying

(a) each $\T_i$ is Delaunay in $d_i$,

(b) if $\T_i=T_{i+1}$, there exists a function $u:V \to \R$,
called a conformal factor,  so that if $e$ is an edge in $\T_i$
with end points $v$ and $v'$, then the lengths $l_{d_{i+1}}(e)$
and $l_{d_i}(e)$ of $e$ in $d_i$ and $d_{i+1}$ are related by
\begin{equation}
\label{conformal} l_{d_{i+1}}(e) =l_{d_i}(e) e^{ u(v)+u(v')},
\end{equation}

(c) if $\T_i \neq \T_{i+1}$, then $(S, d_i)$ is isometric to $(S,
d_{i+1})$ by an isometry homotopic to the identity in $(S, V)$.

The discrete conformal class of a PL metric is called a \it
discrete Riemann surface.\rm
\end{definition}

\begin{figure}[ht!]
\begin{center}
\begin{tabular}{c}
\includegraphics[width=0.9\textwidth]{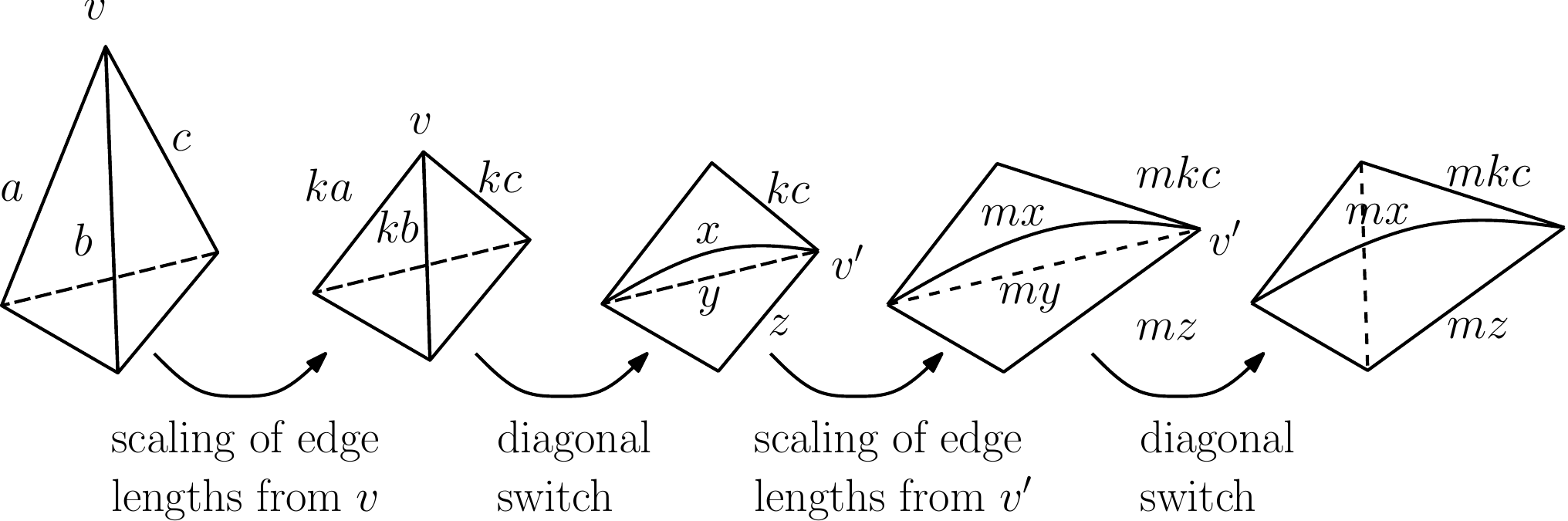}
\end{tabular}
\end{center}
\caption{discrete conformal change of PL metrics, all
triangulations are Delaunay} \label{figure_1}
\end{figure}


\begin{theorem} \label{main} Suppose $(S, V)$ is a closed connected marked surface
and  $d$ is any PL metric on $(S, V)$.  Then for any $K^*:V \to
(-\infty, 2\pi)$ with $\sum_{v \in V} K^*(v) =2\pi \chi(S)$, there
exists a PL metric $d'$, unique up to scaling,  on $(S, V)$ so
that $d'$ is discrete conformal to $d$ and the discrete curvature
of $d'$ is $K^*$. Furthermore, the discrete Yamabe flow with
surgery associated to curvature $K^*$ with initial value $d$
converges to $d'$ exponentially fast.
\end{theorem}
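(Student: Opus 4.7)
My plan is to cast the theorem as the minimization of a globally convex energy on the space $\R^V$ of discrete conformal factors, and to interpret the Yamabe flow with surgery as its gradient descent. Fix the initial metric $d$. For each combinatorial triangulation $\T$ of $(S,V)$ and each $u\in\R^V$, the relation~\eqref{conformal} together with $d$ determines tentative edge lengths $l(e)=l_d(e)e^{u(v)+u(v')}$; when these satisfy the triangle inequality in every face they define a PL metric $d_{\T,u}$. Following the Bobenko-Pinkall-Springborn construction, I would assemble per-triangle contributions (Milnor-Lobachevsky volumes of ideal hyperbolic tetrahedra erected over each face) into a function $F_\T\colon\R^V\to\R$ whose partial derivative in $u(v)$ equals $2\pi-K_{d_{\T,u}}(v)$ and whose Hessian is the classical $u$-derivative-of-angle matrix. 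A Penner-type computation then yields convexity of $F_\T$.

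To obtain a single variational problem that is invariant under the surgeries, I would work with the closed cell $C_\T=\{u\in\R^V:\T\text{ is Delaunay in }d_{\T,u}\}$ and glue the $F_\T$'s. The central structural step, and the main obstacle, is to prove (i) the family $\{C_\T\}$ covers $\R^V$ as $\T$ ranges over all triangulations of $(S,V)$, and (ii) whenever two cells $C_\T$, $C_{\T'}$ overlap---necessarily on a locus where $\T$ and $\T'$ differ by diagonal flips across edges whose two adjacent angles sum to exactly $\pi$---the functions $F_\T$ and $F_{\T'}$ agree. The matching in (ii) reduces to an identity between the Milnor-Lobachevsky volumes of the two ideal tetrahedra associated to a flip, a Ptolemy-type identity for Penner coordinates; the covering (i) follows from a flipping argument that terminates because a suitable combinatorial quantity is monotone. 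Granting (i) and (ii), the pieced-together function $F\colon\R^V\to\R$ is $C^1$ and convex, with $(\nabla F)(u)(v)=2\pi-K_{d_{\T,u}}(v)$ on each $C_\T$.

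With this in hand the theorem follows quickly. Set $E(u)=F(u)-\sum_{v\in V}(2\pi-K^*(v))u(v)$, a $C^1$ convex function on $\R^V$ whose critical points are exactly those $u$ for which the resulting PL metric has discrete curvature $K^*$. Because $E$ is invariant under $u\mapsto u+c\mathbf{1}$, I restrict to the hyperplane $\{u:\sum_v u(v)=0\}$; the strict hypothesis $K^*(v)<2\pi$ at every vertex then yields coercivity there, so $E$ attains a unique minimum $u^*$, proving existence and uniqueness of $d'$ up to scaling. Finally, the discrete Yamabe flow with surgery is precisely the negative gradient flow of $E$, with the surgery step corresponding to wall crossings between adjacent cells $C_\T$, which are smooth thanks to (ii). Strict convexity of the Hessian on $\{\sum_v u=0\}$ near $u^*$---a standard consequence of the Penner formulas together with connectedness of $(S,V)$---gives a uniform spectral gap, whence exponential convergence follows by a routine Lyapunov estimate.
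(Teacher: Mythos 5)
Your overall strategy---glue Bobenko--Pinkall--Springborn-type energies over Delaunay cells into a global $C^1$ convex function of the conformal factor $u$ and minimize---is in spirit the same variational principle the paper uses, but you have buried the two points where the real work lies, and as written both are genuine gaps. First, your cells $C_\T$ and metrics $d_{\T,u}$ are not well defined for an arbitrary triangulation $\T$: the lengths $l_d(e)$ only make sense when $\T$ is geometric in $d$, and a metric $d'$ that is discrete conformal to $d$ in the sense of Definition \ref{dc} may be reachable only through a chain of intermediate metrics and retriangulations, so there need be no single triangulation in which $d'$ arises from $d$ by the formula (\ref{conformal}). Consequently the claims that the conformal class is globally parametrized by $u\in\R^V$ and that $\bigcup_\T C_\T=\R^V$ are not delivered by a ``flipping argument with a monotone quantity''; they are essentially the content of \S 4 of the paper and require the detour through Penner's decorated Teichm\"uller space: the $C^1$ diffeomorphism $\A:T_{pl}(S,V)\to T_D(S-V)$ identifies the discrete conformal class of $d$ with $\{p\}\times\R^n_{>0}$, the covering by Delaunay cells $D(\T)$ is Penner's cell decomposition, and the finiteness of the cells meeting $\{p\}\times\R^n_{>0}$ (Akiyoshi's theorem, proved in the appendix) is what makes the piecewise-analytic gluing and the later compactness arguments work. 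You correctly identify the Ptolemy identity as the $C^0$ matching condition across walls, but the $C^1$ matching of the curvature map (needed for your gradient flow to be well defined through surgeries) is itself a nontrivial computation with the derivative cosine law.

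Second, ``the strict hypothesis $K^*(v)<2\pi$ then yields coercivity'' is asserted, not proved, and it is not routine: convexity of $E$ on the hyperplane $\sum_v u(v)=0$ gives at most one critical point but says nothing about existence unless you control the curvature map along degenerating sequences $u^{(m)}$ leaving every compact set. The paper does this in \S 5.1 by a combinatorial degeneration analysis (the black/white vertex lemma): after passing to a subsequence lying in a single Delaunay cell, some vertex must have curvature tending to $2\pi$, so the image of the curvature map is closed in $Q=GB\cap(-\infty,2\pi)^n$; combined with injectivity and invariance of domain this gives surjectivity. Your uniform spectral gap near $u^*$ is fine once $u^*$ is known to exist, but without the degeneration analysis the minimum may fail to exist. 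In short, the skeleton is right, but your steps (i) and the coercivity claim are precisely the theorems that need proving, and their proofs require the decorated Teichm\"uller and finiteness machinery you have omitted.
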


For the constant function $K^*=2\pi\chi(S)/|V|$ in theorem
\ref{main}, we obtain a constant curvature PL metric $d'$, unique
up to scaling, discrete conformal to $d$. This is a discrete
version of the uniformization theorem. Theorem \ref{main} also
holds for compact marked surfaces with non-empty boundary. In that
case, we double the surface to obtain a closed surface. We omit
the details.


The prototype of definition 1.1 comes from the work of
Ro$\check{c}$ek and Williams in physics \cite{rw} and \cite{luo}.
The drawback of the definition in \cite{rw} and \cite{luo} is that
it depends on the choice of triangulations.  A convex variational
principle associated to the discrete conformality was established
in \cite{luo}.

It is highly desirable to have a quantitative estimate of the
difference between discrete conformality and classical
conformality. See \cite{gls} for an estimate of this type.

 There are many
proofs of the Poincare-Koebe uniformization theorem. The proof
most closely related to our work is Hamilton's Ricci flow. The
Ricci flow proof of the uniformization theorem for closed surfaces
was achieved by a combination of the work of \cite{hamilton},
\cite{chow}, and \cite{chenlutian}. 
 In the discrete case, the situation is much more
complicated due to the combinatorics. To prove theorem \ref{main},
we use Penner's decorated Teichumuller theory \cite{penner}, the
work of Bobenko-Pinkall-Springborn \cite{bps} relating PL metrics
to Penner's theory and a variational principle developed in
\cite{luo}.

Hamilton's Ricci flow is a flow in the space of all Riemannian
metrics on a manifold. In the discrete setting, the discrete
Yamabe flow with surgery is a $C^1$-smooth flow on the finite
dimensional Teichm\"uller space of flat cone metrics on a closed
marked surface $(S,V)$.  

A theorem of Troyanov \cite{troyanov}  states that the same result
of theorem \ref{main} holds if discrete conformality is replaced
by
the classical Riemannian conformality. 
The major difference between Troyanov's work and  theorem
\ref{main} is that in our case, we discretize the metric and
conformality so that a metric is represented as a edge length
vector in $\R^N$ and discrete conformality can be decided
algorithmically from edge length vector. Theorem \ref{main} is
also related to the work of Kazdan and Warner \cite{kw1} and
\cite{kw2} on prescribing Gaussian curvature. It is possible that
theorem \ref{main} implies the existence part of Troyanov's
theorem and Kazdan-Warner's theorem for closed surfaces by
approximation.

The similar theorem for hyperbolic cone metrics on $(S,V)$ has
been proved in \cite{gglsw}. In this case, two hyperbolic cone
metrics $d, d'$ on $(S,V)$ are \it discrete conformal \rm if there
exist sequences of hyperbolic cone metrics $d_1=d, ..., d_m =d'$
on $(S, V)$ and triangulations $\T_1, ..., \T_m$ of $(S, V)$
satisfying (a) each $\T_i$ is Delaunay in $d_i$, and (b) if
$\T_i=T_{i+1}$, there exists a function $u:V \to \R$ so that if
$e$ is an edge in $\T_i$ with end points $v$ and $v'$, then the
lengths $l_{d_{i+1}}(e)$ and $l_{d_i}(e)$ of $e$ in $d_i$ and
$d_{i+1}$ are related by
\begin{equation}\label{hy} \sinh(\frac{ l_{d_{i+1}}(e)}{2})
=\sinh(\frac{l_{d_i}(e)}{2}) e^{ u(v)+u(v')}, \end{equation} and
(c) if $\T_i \neq \T_{i+1}$, then $(S, d_i)$ is isometric to $(S,
d_{i+1})$ by an isometry homotopic to the identity in $(S, V)$.
The condition (\ref{hy}) was first introduced in \cite{bps}.

\begin{theorem}  Suppose $(S, V)$ is a closed connected marked surface
and  $d$ is any hyperbolic cone metric on $(S, V)$.  Then for any
$K^*:V \to (-\infty, 2\pi)$ with $\sum_{v \in V} K^*(v) >2\pi
\chi(S)$, there exists a unique hyperbolic cone metric $d'$ on
$(S, V)$ so that $d'$ is discrete conformal to $d$ and the
discrete curvature of $d'$ is $K^*$. Furthermore, the discrete
Yamabe flow with surgery associated to curvature $K^*$ with
initial value $d$ converges to $d'$ exponentially fast. In
particular, if $\chi(S)<0$ and  $K^*=0$, each hyperbolic cone
metric on $(S,V)$ is discrete conformal to a unique hyperbolic
metric on $S$.
\end{theorem}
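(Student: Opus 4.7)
The plan is to follow the same strategy used for Theorem \ref{main}, replacing Penner's decorated Teichm\"uller theory for punctured surfaces (the Euclidean/flat setting) with its hyperbolic analogue coming from Bobenko--Pinkall--Springborn's interpretation of relation (\ref{hy}) as a Ptolemy-type identity in the upper half-space model. Concretely, each hyperbolic triangle with vertex weights $u_i$ lifts to a hyperbolic ``generalized'' triangle (a truncated or regular triangle in $\mathbf{H}^3$) whose edge lengths transform by (\ref{hy}); on a fixed Delaunay triangulation $\T$ this gives a finite-dimensional parametrization of hyperbolic cone metrics in the discrete conformal class by $u \in \R^V$. Define the target function
\[
\Phi_\T(u) \;=\; \int_0^u \sum_{v\in V} \bigl( K^*(v) - K_v(u) \bigr)\, du(v),
\]
where $K_v(u)$ is the discrete curvature at $v$ of the metric produced from $d$ by the conformal factor $u$ (read off on $\T$). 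The integrand is closed by the symmetry $\partial K_v/\partial u_w = \partial K_w/\partial u_v$, which in turn follows, as in \cite{luo} and \cite{bps}, from the cosine law for hyperbolic triangles; so $\Phi_\T$ is well defined locally.

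The first step is to verify that $\Phi$ extends to a single globally defined $C^1$ function on all of $\R^V$ when one allows Delaunay flips along the way (the ``with surgery'' mechanism). Following \cite{bps} and \cite{gglsw}, the key observation is that flipping a diagonal between two Delaunay-equivalent triangulations leaves the hyperbolic cone metric unchanged, and the two local formulas for $\Phi$ on the two sides of the flipping wall match to first order because the Delaunay condition forces the relevant angle sums to agree. Thus one obtains a well defined $C^1$ function $\Phi:\R^V \to \R$ whose gradient is $K^* - K(\cdot)$, so critical points of $\Phi$ correspond exactly to discrete conformal metrics with curvature $K^*$.

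The second step is to prove that $\Phi$ is strictly convex and proper. Strict convexity reduces, as in the Euclidean case, to the convexity of a local piece coming from each hyperbolic triangle, which in \cite{bps} is traced back to the concavity of the volume function on ideal hyperbolic tetrahedra (Milnor--Lobachevsky). The ``with surgery'' gluing preserves $C^1$ convexity across the Delaunay walls. Properness is the main obstacle: one must rule out that $u$ escapes to infinity along a sequence with bounded $\Phi$. Here the hypothesis $K^*(v) < 2\pi$ together with $\sum K^*(v) > 2\pi\chi(S)$ is exactly what is needed: the first inequality controls the behaviour as individual $u(v) \to -\infty$ (no single vertex can absorb all the angle), while the second, via Gauss--Bonnet and the fact that hyperbolic area is strictly positive, prevents $u$ from drifting uniformly to $+\infty$ or $-\infty$. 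Making this precise requires the surgery step, because along a degenerating sequence the Delaunay triangulation typically changes infinitely often; one argues that after passing to a subsequence the combinatorial type stabilizes, and then invokes a variant of the properness lemma in \cite{luo} and \cite{gglsw} in the hyperbolic setting.

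Once strict convexity and properness of $\Phi$ are established, the unique minimizer $u^*$ gives the unique hyperbolic cone metric $d'$ discrete conformal to $d$ with curvature $K^*$. The discrete Yamabe flow with surgery is by construction the negative gradient flow $\dot u = K^* - K(u)$ of $\Phi$, so it is defined for all time; strict convexity of $\Phi$ near $u^*$ (its Hessian is positive definite there) yields a uniform spectral gap and hence exponential convergence $u(t) \to u^*$. The final assertion when $\chi(S)<0$ and $K^* \equiv 0$ follows as a special case, using that the condition $0 > 2\pi\chi(S)$ is automatic, and a smooth hyperbolic metric is just a hyperbolic cone metric with all cone angles equal to $2\pi$.
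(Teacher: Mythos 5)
First, a point of reference: this paper does not actually prove the hyperbolic statement --- it is quoted from the companion paper \cite{gglsw} (``in preparation''), and only the Euclidean case (Theorem \ref{main}) is proved here. So your proposal can only be measured against the Euclidean template of Sections 2--5, which is indeed the strategy one would adapt. At that level your outline points in the right direction (a variational functional with gradient $K^*-K$, $C^1$ gluing across Delaunay walls via a Ptolemy-type identity, convexity from the cosine law, a compactness argument for existence), but two load-bearing ingredients are missing.

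The first is the global structure theorem. Before $\Phi:\R^V\to\R$ is even defined you must know that the discrete conformal class of $d$ is globally parametrized by $u\in\R^V$, i.e., that for every $u$ there exists a unique cone metric obtained from $d$ by the factor $u$ after finitely many Delaunay flips, and that this parametrization is $C^1$. In the Euclidean case this is precisely what the $C^1$-diffeomorphism $\A:T_{pl}(S,V)\to T_D(S-V)$ onto Penner's decorated Teichm\"uller space delivers (the conformal class is the fiber $\{p\}\times\R_{>0}^n$), together with Akiyoshi's finiteness theorem, which guarantees the fiber meets only finitely many Delaunay cells so that the piecewise-analytic, globally $C^1$ functions $\mathbf F$ and $W$ make sense. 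You assume this parametrization (``the two local formulas match to first order because the Delaunay condition forces the relevant angle sums to agree'') without constructing the hyperbolic analogue of $\A$ or proving the analogue of the key derivative computation in Section 4, namely that the derivative of the flip (Ptolemy) formula agrees with the derivative of the geometric diagonal-length function exactly on the Delaunay wall; that is a genuine calculation, not a formal consequence of Delaunayness. The second gap is surjectivity. The paper does not prove properness of the energy; it proves the curvature map's image is open (injectivity plus invariance of domain) and closed, the latter by a concrete degeneration analysis (the black/white vertex argument showing some curvature tends to $2\pi$ as $u$ escapes to infinity). Your properness heuristic --- ``$K^*<2\pi$ controls $u(v)\to-\infty$, $\sum K^*>2\pi\chi(S)$ prevents uniform drift'' --- is not an argument, and your appeal to ``a variant of the properness lemma in \cite{gglsw}'' is circular for a blind proof. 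Note also that in the hyperbolic case there is no scaling normalization, so the degeneration analysis must handle $u_i\to+\infty$ and $u_i\to-\infty$ separately, and the expected conclusion is a positive definite (not merely semi-definite) Hessian, which is what upgrades ``unique up to scaling'' to genuine uniqueness.
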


\subsection{Notations and conventions}

Triangulations to be used in the paper are defined as follows.
Take a finite disjoint union of Euclidean triangles and identify
edges in pairs by homeomorphisms. The quotient space is a compact
surface together with a \it triangulation \rm $\T$ whose simplices
are the quotients of the simplices in the disjoint union. Let
$V=V(\T)$ and $E=E(\T)$ be the sets of vertices and edges in $\T$.
 If $e$ is an edge in
$\T$ adjacent to two distinct triangles $t, t'$, then the \it
diagonal switch \rm on $\T$ at $e$ replaces $e$ by the other
diagonal in the quadrilateral $t \cup_e t'$ and produces a new
triangulation $\T'$ on $(S,V)$. A PL metric $d$ on $(S,V)$ is
obtained as isometric gluing of Euclidean triangles along edges so
that the set of cone points is in $V$. Given a PL metric $d$ and a
triangulation $\T$ on $(S, V)$, if each triangle in $\T$ (in $d$
metric) is isometric to a Euclidean triangle, we say $\T$ is \it
geometric \rm in $d$.  If $\T$ is a triangulation of $(S,V)$
isotopic to a geometric triangulation $\T'$ in a PL metric $d$,
then the \it length \rm of an edge  $e \in E(\T)$ (or \it angle
\rm of a triangle at a vertex in $\T$) is defined to be the length
 of the corresponding geodesic edge $e' \in E
(\T')$ (respectively angle of the corresponding triangle in $\T'$)
measured in metric $d$. The interior of a surface $X$ is denoted
by $int(X)$. If $X$ is a finite set, $|X|$ denotes its cardinality
and $\R^X$ denotes the vector space $\{f: X \to \R\}$. All
surfaces are assumed to be connected.

\subsection{Acknowledgement} The work is supported in part by the NSF of USA and
the NSF of China.

\section{Teichm\"uller space of PL metrics and Delaunay conditions}

Suppose $(S, V)$ is a marked connected surface. The discrete
curvature $K: V \to (-\infty, 2\pi)$ of a PL metric $d$ on $S$
satisfies the Gauss-Bonnet formula that $\sum_{v \in V} K(v) =2\pi
\chi(S)$. Therefore, if $\chi(S-V) \geq 0$, i.e., $(S, V) =(S^2,
\{v_1,...,v_n\})$ with $n \leq 2$, the Gauss-Bonnet identity
implies there is no PL metric on $(S, V)$. From now on, we will
always assume that the Euler characteristic $\chi(S-V) <0$. Most
of the results in this section are well known. We omit details.

\subsection{Teichm\"uller space of PL metrics and its length coordinates}

Two PL metrics $d, d'$ on $(S, V)$ are called \it equivalent \rm
if there is an isometry $h: (S, V, d) \to (S, V, d')$ so that $h$
is isotopic to the identity map on $(S, V)$. The \it Teichm\"uller
space of all PL metrics \rm on $\Sigma$, denoted by $T_{pl}(S,V)$,
is the set of all equivalence classes of PL metrics on $(S,V)$,
i.e.,
$$T_{pl} = T_{pl}(S,V)=\{ d |\text{ $d$ is a PL metric on $(S, V)$} \}/
isometry \cong id.$$

A result of Troyanov \cite{troyanov} shows that $T_{pl}(S,V)$ is
homeomorphic to $\R^{-3 \chi(S-V)}$. Below, we will use a natural
collection of charts on $T_{pl}$ which makes it a real analytic
manifold. Suppose $\T$ is a triangulation of $(S, V)$ with set of
edges $E=E(\T)$. Let
$$ \R^{E(\T)}_{\Delta} =\{ x \in \R_{>0}^E | x(e_i)+x(e_j)
> x(e_k), \text{if there is a triangle $t$ in $\T$ with edges $e_i, e_j, e_k$}\}$$
be the convex
polytope in $\R^E$.  For each $x \in \R^{E(\T)}_{\Delta}$, one
constructs a PL metric $d_x$ on $(S, V)$ by replacing each
triangle $t$ of edges $e_i, e_j, e_k$ by a Euclidean triangle of
edge lengths $x(e_i), x(e_j), x(e_k)$ and gluing them by
isometries along the corresponding edges. This construction
produces an injective map
$$ \Phi_{\T}:
\R^{E(\T)}_{\Delta} \to T_{pl}(S,V)$$ sending $x$ to 
$[d_x]$. 
 The image $P(\T) :=\Phi_{\T}(\R_{\Delta}^{E(\T)})$ is the space
of all PL metrics $[d]$ on $(S, V)$ for which $\T$ is isotopic to
a geometric triangulation in $d$. We call $x$  the \it length
coordinate \rm of $d_x$ and $[d_x]=\Phi_{\T}(x)$ with respect to
$\T$.    If $u: V \to \R$ is a discrete conformal factor and $x
\in \R_{>0}^E$, then the discrete conformal change $u*x$ of $x$ is
$u*x(vv') = x(vv') e^{ u(v)+u(v')}$ for all edges $vv' \in E(\T)$.
This is the prototype of (\ref{conformal}) introduced in \cite{rw}
and \cite{luo}.

In general $P(\T) \neq T_{pl}(S,V)$. Indeed, let $d$ be the metric
double of an obtuse triangle $t$ along its boundary and $\T$ be
the natural triangulation whose edges are edges of $t$. Let $\T'$
be the triangulation obtained by the diagonal switch at the
shortest edge of $t$. Then $\T'$ is not isotopic to any geometric
triangulation in $d$.

 Since each PL metric on $(S,V)$ admits a
geometric triangulation (for instance its Delaunay triangulation),
we see that $T_{pl}(S, V) =\cup_{\T} P(\T)$ where the union is
over all triangulations of $(S,V)$. The space $T_{pl}(S,V)$ is a
real analytic manifold with coordinate charts $\{ (P(\T),
\Phi_{\T}^{-1}) | \T \text{ triangulations of $(S,V)$}\}$. To see
transition functions $\Phi_{\T}^{-1}\Phi_{\T'}$ are real analytic,
note that  any two triangulations of $(S,V)$ are related by a
sequence of diagonal switches. Therefore, it suffices to show the
result for $\T$ and $\T'$ which are related  by a diagonal switch
along an edge $e$. In this case, the transition function
$\Phi_{\T}^{-1}\Phi_{\T'}$ sends $(x_0,x_1, ...., x_m)$ to
$(f(x_0, ..., x_m), x_1, ..., x_m)$ where $x_0$ is the length of
$e$ and $f$ is the length of the diagonal switched edge. See
figure~\ref{figure_2}. Let $t, t'$ be the triangles adjacent to $e$ so that the
lengths of edges of $t, t'$ are \{$x_0, x_1, x_2$\} and \{$x_0,
x_3, x_4$\}. Using the cosine law, we see that $f$ is a real
analytic function of $x_0, ..., x_4$. In the case that the
quadrilateral $t \cup_e t'$ is inscribed to a circle, we have the
famous Ptolemy identity
$x_0f =x_1x_3+x_2x_4.$

\begin{figure}[ht!]
\centering
\includegraphics[scale=0.70]{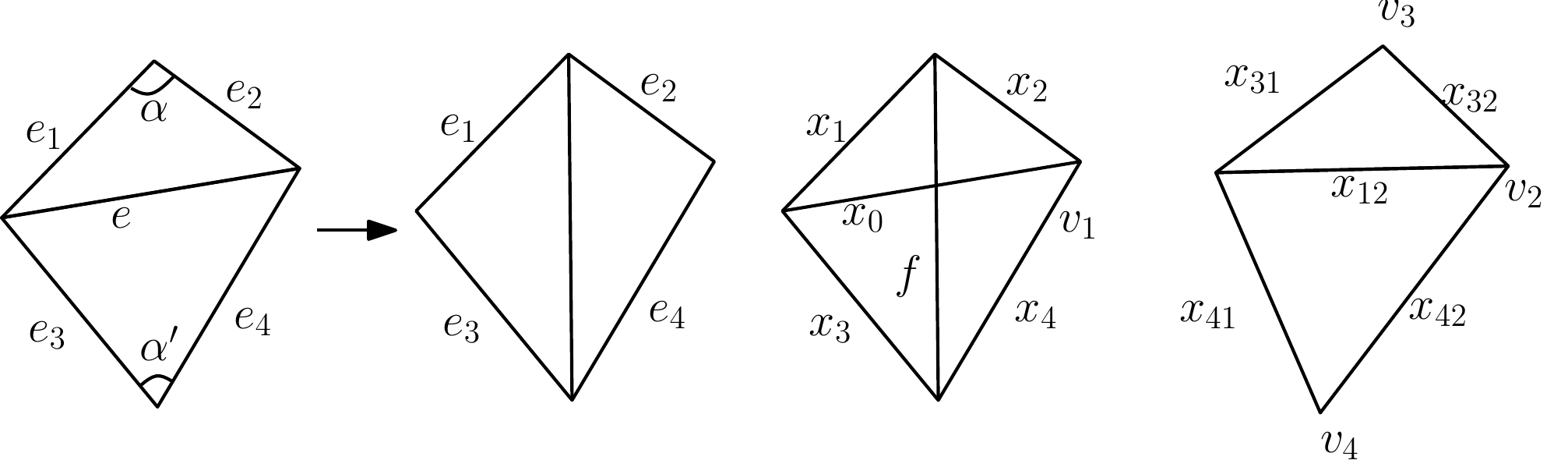}
\caption{diagonal switch and lengths of quadrilaterals}
\label{figure_2}
\end{figure}


\subsection{Delaunay triangulations}
Given a PL metric $d$ on $(S, V)$, its \it Voronoi decomposition
\rm is the collection of 2-cells $\{ R(v) | v \in V\}$ where
$R(v)=\{ x \in S | d(x, v) \leq d(x, v')$ for all $v' \in V\}$.
Its dual is called a \it Delaunay tessellation \rm $\mathcal C(d)$
of $(S,V, d)$ (\cite{aurenhamer}, \cite{bobenkosp}). It is a cell
decomposition
 of $(S,V, d)$ with vertices $V$ and  two vertices $v, v'$ are jointed by an edge if
  and only if $R(v) \cap R(v')$ is
1-dimensional. A \it Delaunay triangulation $\T(d)$ \rm of $(S,V)$
in metric $d$ is a geometric triangulation of the Delaunay
tessellation $\mathcal C(d)$ by further triangulating all
non-triangular 2-dimensional cells (without introducing extra
vertices). For a generic PL metric $d$, $\mathcal C(d)$ is a
Delaunay triangulation of $d$. 

\begin{lemma} \label{diagonalswitch} (See \cite{bobenkosp}, \cite{aurenhamer})
Each PL metric $d$ on $(S,V)$ has
a Delaunay triangulation.  If $\T$ and $\T'$ are Delaunay
triangulations of $d$, then there exists a sequence of Delaunay
triangulations $\T_1=\T$, $\T_2$, ..., $\T_k=\T'$ of $d$ so that
$T_{i+1}$ is obtained from $\T_i$ by a diagonal switch.
\end{lemma}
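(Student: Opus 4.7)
The plan is to prove existence via the Voronoi--Delaunay duality adapted to the PL setting, and then establish flip-connectivity by reducing to the classical connectivity of the flip graph on triangulations of a polygon inscribed in a circle.

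For existence, I would start from the Voronoi decomposition $\{R(v) : v \in V\}$ of $(S,d)$. Because cone points all lie in $V$, the interior of each $R(v)$ contains no cone singularities, so locally near an interior point of any Voronoi cell (or edge, or vertex) the metric develops isometrically into $\R^2$. The dual complex $\mathcal{C}(d)$ puts a vertex at each $v$, an edge $vv'$ whenever $R(v)\cap R(v')$ is $1$-dimensional, and a $2$-cell at each point equidistant to $\ge 3$ vertices; developing such a dual $2$-cell into $\R^2$ exhibits it as a polygon inscribed in a circle. To produce a Delaunay triangulation $\T(d)$, one arbitrarily triangulates each non-triangular $2$-cell of $\mathcal{C}(d)$ by adding chords among its vertices. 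The local Delaunay condition (sum of the two angles facing each edge is $\le\pi$) holds automatically: for an edge of $\mathcal{C}(d)$ the sum is strictly less than $\pi$, and for a chord interior to an inscribed polygon the inscribed angle theorem gives exactly~$\pi$.

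For the second statement, the key observation is that every Delaunay triangulation of $d$ refines $\mathcal{C}(d)$: an edge $e$ of $\mathcal{C}(d)$ separates two distinct Voronoi regions, so the two angles it faces sum strictly to less than $\pi$; if $e$ were replaced by the other diagonal of the quadrilateral in a candidate triangulation, the two angles facing the new edge would sum to strictly more than $\pi$, violating the Delaunay condition. Therefore any two Delaunay triangulations $\T,\T'$ of $d$ share all edges of $\mathcal{C}(d)$ and differ only in how the non-triangular $2$-cells of $\mathcal{C}(d)$ are diagonalized. Each such $2$-cell is an inscribed polygon, and the classical fact that the flip graph on triangulations of a convex $n$-gon is connected (associahedron connectivity) supplies, within each cell independently, a sequence of diagonal switches from the $\T$-diagonalization to the $\T'$-diagonalization. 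Concatenating these sequences over all non-triangular cells produces the desired sequence $\T_1=\T,\T_2,\dots,\T_k=\T'$. All intermediate triangulations are Delaunay because, by the inscribed angle theorem again, any diagonal interior to an inscribed polygon satisfies the local Delaunay condition with equality.

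The main obstacle I expect is the PL-specific bookkeeping rather than any substantial combinatorics: one must argue carefully that the Voronoi cell $R(v)$ around each $v\in V$ is a genuine $2$-cell (no topology issues from going around a cone), that distances to other vertices are realized by geodesics avoiding cone points in their interior, and that each dual $2$-cell of $\mathcal{C}(d)$ develops isometrically into $\R^2$ as a polygon inscribed in an honest Euclidean circle. Once these local developing maps are set up, every step of the argument above becomes a planar Euclidean fact, and one can follow the treatment in \cite{aurenhamer} and \cite{bobenkosp} for the details.
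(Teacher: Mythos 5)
The paper gives no proof of this lemma---it explicitly omits the details and defers to \cite{bobenkosp} and \cite{aurenhamer}---but your argument is the standard one from those references and is consistent with the paper's own definition of a Delaunay triangulation as a refinement of the Delaunay tessellation $\mathcal{C}(d)$ obtained by triangulating its non-triangular $2$-cells, each of which develops into $\R^2$ as a cyclic polygon, so that flip-connectivity reduces to the connectivity of the associahedron within each cell. The one step to make fully precise is the claim that any triangulation satisfying the angle condition must contain every edge of $\mathcal{C}(d)$: your ``replace $e$ by the other diagonal of the quadrilateral'' phrasing only excludes one way such an edge could be absent (it could instead be crossed by several edges of the candidate triangulation), and the usual remedy is the classical ``locally Delaunay implies globally Delaunay'' lemma---though under the paper's definition in Section~2.2 this containment is true by definition.
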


\begin{definition} (Delaunay cell) For a triangulation $\T$ of $(S, V)$, the
associated
Delaunay cell in $T_{pl}(S,V)$ is defined by
$$D_{pl}(\T)  =\{ [d] \in T_{pl}(S, V) |  \text{ $\T$ is isotopic to
a Delaunay triangulation of $d$} \}.$$
\end{definition}

Note that $D_{pl}(\T) \subset P(\T)$ and is non-empty. Indeed the
PL metric so that the length of each edge is 1  is in
$D_{pl}(\T)$. Assume that $\T$ is geometric in $d$.  One can
characterize PL metrics $[d] \in D_{pl}(\T)$ in terms
 of the length coordinate $x =\Phi_{\T}^{-1}([d])$ as follows.  By definition
 $\T$ is Delaunay in $d$ if and only if
 \begin{equation}\label{del}
 \alpha + \alpha' \leq \pi, \quad \text{i.e.,} \quad
\cos(\alpha)+\cos(\alpha') \geq 0, \quad \text{for each edge $e\in
E(\T)$}
\end{equation} where $\alpha, \alpha'$ are the two angles facing
$e$. See figure~\ref{figure_2}.
 Let $t$ and $t'$ be the triangles adjacent to $e$ and $e, e_1,
e_2$ be edges of $t$ and $e, e_3, e_4$ be the edge of $t'$. Note
that $t'=t$ is allowed. Suppose the length of $e$ (in $d$) is
$x_0$ and the length of $e_i$ is $x_i$, $i=1,...,4$. By the cosine
law, Delaunay condition (\ref{del}) is the same as

\begin{equation} \label{2.44}
 \frac{x_1^2+x_2^2-x_0^2}{2 x_1 x_2} + \frac{x_3^2 + x_4^2
-x_0^2}{2 x_3 x_4} \geq 0, \quad \text{for all edges $e \in
E(\T$)}.
\end{equation}

Inequality (\ref{2.44}) shows that $D_{pl}(\T) \subset T_{pl}$ is
bounded by a finite set of real analytic subvarieties. It turns
out $\{D_{pl}(\T)| \T\}$ forms a real analytic cell decomposition
of $T_{pl}$.

Let us recall the basics of real analytic cell decompositions of a
real analytic manifold $M^n$. A subspace $C \subset M$ is a \it
real analytic cell \rm if there is a real analytic diffeomorphism
$h$ defined in an open neighborhood $U$ of $C$ into $\R^n$ so that
$h(C)$ is  a  convex polytope in $\R^n$. A \it face \rm $C'$ of
$C$ is a subset so that $h(C')$ is a face of the polytope $h(C)$.
A \it real analytic cell decomposition \rm of $M$ is a locally
finite collection of n-dimensional real analytic cells $\{C_i | i
\in J\}$ so that $M =\cup_{i \in J} C_i$  and $C_{i_1} \cap ...
\cap C_{i_k}$ is a face of $C_{i_j}$ for all choices of indices.

A theorem of Rivin \cite{rivin} shows that $D_{pl}(\T)$ is a real
analytic cell of dimension $-3\chi(S-V)$. Indeed, one takes the
open neighborhood of $D_{pl}(\T)$ to be $P(\T)$ and fixes $e_1 \in
E$. Define  $h$ to be the real analytic map sending $x$ to
$(\phi_0(x), x(e_1))$ where $\phi_0(x) (e) =\alpha + \alpha'$
where $\alpha$ and $\alpha'$ are angles facing $e$. Rivin proved
that $h$ is a real analytic diffeomorphism into an open subset of
a codimension-1 affine subspace of $\R^E \times \R$ so that
$h(D_{pl}(\T))$ is a convex polytope and faces of $D_{pl}(\T)$ are
subsets defined by $\alpha+\alpha'=\pi$ for some collection of
edges $e$.
By \cite{aurenhamer}, \cite{bobenkosp}, if $W= D_{pl}(\T_1)
\cap.... \D_{pl}(\T_k) \neq \emptyset$, then $W$ is a face of
$D_{pl}(\T_i)$ for each $i$. Indeed, $W$ is the face of
$D_{pl}(\T_i)$ defined by the set of equalities:
$\alpha+\alpha'=\pi$ for all edges $e \notin \cap_{j=1}^k
E(\T_j)$.


The discussion above shows that  we have a real analytic cell
decomposition of the Teichm\"uller space by $\{D_{pl}(\T)| \T\}$
invariant under the action of the mapping class group,
\begin{equation}\label{2.1}
T_{pl}(S,V) =\cup_{[\T]} D_{pl}(\T)
\end{equation}
where the union is over  all isotopy classes $[\T]$ of
triangulations of $(S,V)$.


\section{Penner's work on decorated Teichm\"uller spaces}

One of the main tools used in our proof is the decorated
Teichm\"uller space theory developed by R. Penner \cite{penner}.
We will recall  the theory and prove a few new results in this
section. For details, see \cite{penner} or \cite{guoluo}.

\subsection{Decorated triangles}
Let $\H^2$ be the 2-dimensional hyperbolic plane. An \it ideal
triangle \rm  is a hyperbolic triangle in $\H^2$ with three
vertices $v_1, v_2, v_3$ at the sphere at infinity of $\H^2$. Any
two ideal triangles are isometric. A \it decorated ideal triangle
\rm $\tau$ is an ideal triangle so that each vertex $v_i$ is
assigned a horoball $H_i$ centered at $v_i$. Let $e_i$ be the
complete geodesic edge of $\tau$ opposite to the vertex $v_i$. The
inner \it angle \rm $a_i$ of $\tau$  is the length of the portion
of the horocycle $\partial H_i$ between  $e_j$ and $e_k$,
$\{i,j,k\}=\{1,2,3\}$.   The \it length \rm $l_i \in \R$ of the
edge $e_i$ in $\tau$ is the signed distance between $H_j$ and
$H_k$ ($j,k \neq i$). To be more precise, if $H_j \cap H_k
=\emptyset$, then $l_i>0$ is the distance between $H_k$ and $H_j$.
If $H_j \cap H_k \neq \emptyset$, then $-l_i$ is the distance
between two end points of $\partial (e_i \cap H_j \cap H_k)$.
Penner calls $L_i =e^{l_i/2}$ the \it $\lambda$-length \rm of
$e_i$.

\begin{figure}[ht!]
\centering
\includegraphics[scale=0.8]{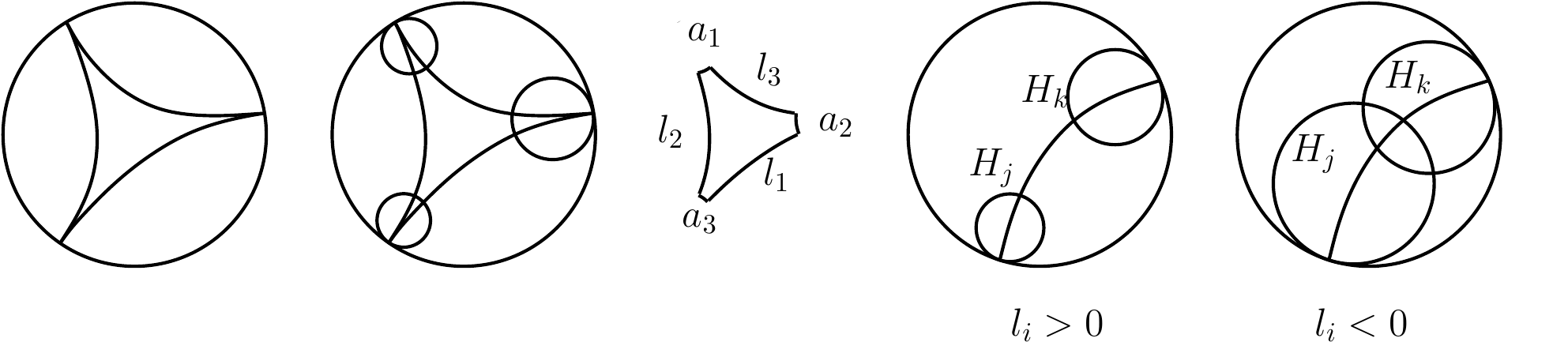}
\caption{decorated ideal triangles and their edge lengths}
\label{figure_3}
\end{figure}

 It is known that for any $l_1, l_2, l_3 \in
\R$, there exists a unique decorated ideal triangle of edge
lengths $l_1, l_2, l_3$. The relationship between the lengths
$l_i$ and angles $a_j$'s is the following \it cosine law \rm
proved by Penner:
\begin{equation}\label{1}
a_i=e^{\frac{1}{2}(l_i -l_j -l_k)}=\frac{L_i}{L_jL_k},  \quad
\quad \ln(a_i)+\ln(a_j)=-l_k,  \quad \quad \{i,j,k\}=\{1,2,3\}.
\end{equation}

Let $S$ be a closed connected surface and $V=\{v_1, ..., v_n\}
\subset S$  and $\Sigma =S-V$. We assume $n \geq 1$ and
$\chi(\Sigma) <0$.  Following Penner, a \it decorated hyperbolic
metric \rm on $\Sigma$ is a complete finite area hyperbolic metric
$d$ on $\Sigma$ together with a horoball $H_i$ centered at the
i-th cusp at $v_i$ for each $i$. We can also parameterize it as
$(d, w)$ where $w=(w_1, ..., w_n) \in \R^n_{>0}$ with $w_i$ being
the length of the horocycle $\partial H_i$. Two decorated
hyperbolic metrics on $\Sigma$ are \it equivalent \rm if there is
an isometry $h$ between them so that $h$ is homotopic to the
identity and $h$ preserves the horoballs. The space of all
equivalence classes of decorated hyperbolic metrics on $\Sigma$ is
defined to be the \it decorated Teichm\"uller space \rm
$T_D(\Sigma)$. If we use $T(\Sigma)$ to denote the usual
Teichm\"uller space of complete hyperbolic metrics of finite area
on $\Sigma$,
 then there is a natural homeomorphism from $T_D(\Sigma)$ to $T(\Sigma)
\times \R^n_{>0}$ by sending $[(d, w)]$ to $([d], w)$. The
projection $T_D(\Sigma) \to T(\Sigma)$ sending $[(d, w)]$ to $[d]$
records the underlying hyperbolic metric.

Now suppose $\T$ is a triangulation of $(S,V)$ with $E=E(\T)$. %
 Then Penner
introduced a homeomorphism map $\Psi_{\T}: \R_{>0}^E \to
T_D(\Sigma)$ called $\lambda$-length coordinate as follows.
 For each $x \in \R_{>0}^E$, i.e.,
$x: E \to \R_{>0}$, $\Psi_{\T}(x)$ is the equivalence class of the
decorated hyperbolic metric $(d, w)$ on $\Sigma$ obtained as
follows. If $t$ is a triangle in $\T$ with three edges $e_i, e_j,
e_k$, one replaces $t$ by the decorated ideal triangle of edge
lengths $2\ln x(e_i), 2 \ln x(e_j)$ and $2 \ln x(e_k)$ and glues
these decorated ideal triangles isometrically along the
corresponding edges preserving decoration. One obtains a decorated
hyperbolic metric $(d,w)$ on $\Sigma$. The horoballs are the
gluing of the corresponding portions of horoballs associated to
ideal triangles. In particular, $w_i$ is the sum of all angles of
the decorated ideal triangles at $v_i$. Penner proved, using his
Ptolemy identity, that $\Psi_{\T}^{-1} \Psi_{\T'}$ is real
analytic for any two triangulations $\T$ and $\T'$. Here Ptolemy
identity for decorated ideal quadrilaterals states that
$AA'+BB'=CC'$ where $A,A',B,B'$ are the $\lambda$-lengths of the
edges of a quadrilateral and $C,C'$ are the $\lambda$-lengths of
the diagonals. See figure~\ref{figure_4}.  In particular,
$\{\Psi_{\T}|\T\}$ forms real analytic charts for $T_D(\Sigma)$.

The following lemma is well know. We omit the proof.
\begin{lemma}\label{l1}  Suppose $C$ is an embedded  horocycle of length $w_i$
 centered at a cusp in a
complete hyperbolic surface and $C'$ is another embedded horocycle
of smaller length $w_i'$ centered at the same cusp. Then the $w_i
=w_i' e^t$ where $t=d(C, C')$ is the distance between $C$ and
$C'$.
\end{lemma}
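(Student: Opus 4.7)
The plan is to reduce to a direct computation in the upper half-plane model of $\H^2$, using the standard description of a cusp neighborhood in a complete hyperbolic surface.

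First I would invoke the well-known structural fact that in a complete finite-area (or more generally geometrically finite) hyperbolic surface, an embedded horocyclic neighborhood of a cusp is isometric to the quotient $\{z \in \H^2 : \mathrm{Im}(z) \geq h_0\}/\langle z \mapsto z+T\rangle$ for some $T>0$, where we work in the upper half-plane model with hyperbolic metric $ds^2 = (dx^2+dy^2)/y^2$ and place the cusp at $\infty$. Since both $C$ and $C'$ are embedded horocycles centered at the same cusp, they lift to horizontal lines $y = h$ and $y = h'$ in this strip, and we may assume the embedded annular region between them lies inside this model neighborhood (shrinking the horoball cutoff $h_0$ if necessary, which does not change the geometry of $C, C'$).

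Next I would carry out two elementary computations. The horocycle $\{y = h\}/\langle z \mapsto z+T\rangle$ has hyperbolic length $T/h$, since its Euclidean length $T$ is scaled by $1/h$ by the hyperbolic metric. Thus $w_i = T/h$ and $w_i' = T/h'$. The hyperbolic distance between the two horizontal lines $y=h$ and $y=h'$ is realized by the vertical geodesic segment, whose length is
\begin{equation*}
\int_{h}^{h'} \frac{dy}{y} = \ln(h'/h).
\end{equation*}
Since $w_i' < w_i$, we have $h < h'$, and so $t = d(C, C') = \ln(h'/h) = \ln(w_i/w_i')$, which rearranges to $w_i = w_i' e^t$.

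There is essentially no obstacle here; the only thing that needs care is justifying the parabolic normal form for a cusp neighborhood and checking that both horocycles fit inside one such neighborhood simultaneously, which follows from the embeddedness assumption together with the standard collar/horoball lemma for cusps.
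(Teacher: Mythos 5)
Your computation is correct and complete: the paper itself omits the proof of this lemma, declaring it ``well known,'' and the upper half-plane calculation you give (horocycle $y=h$ having length $T/h$ in the quotient by $z\mapsto z+T$, and the vertical geodesic giving $d(C,C')=\ln(h'/h)$) is exactly the standard argument being implicitly invoked. The one point you rightly flag --- that both horocycles lie in a single embedded cusp neighborhood --- is immediate here since the larger horoball bounded by $C$ is assumed embedded and contains $C'$.
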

By the lemma and definition, if $\Psi_{\T}(x)=[(d,w)]$ then for
any $k>0$, $\Psi_{\T}(kx) =[(d,\frac{1}{k}w)]$. Thus, for any $(d,
w)$, by choosing $k$ large, one may assume the associated
horoballs are disjoint and embedded in $(d, w/k)$.

\subsection{Delaunay triangulations}
Given a decorated hyperbolic metric $(d, w)$ on $\Sigma$, there is
a natural \it Delaunay triangulation \rm $\T$ associated to $(d,
w)$. The geometric definition of $\T$ goes as follows. First
assume that the associated horoballs $H_1(w), ..., H_n(w)$ are
embedded and disjoint in $\Sigma$. Consider the Voronoi cell
decomposition of the compact surface $X_w=\Sigma-\cup_{i=1}^n
int(H_i(w))$ so that the 2-cell $R_i(w)$ associated to $v_i$ is
$\{ x \in X_w| d(x,
\partial H_i(w)) \leq d(x,\partial H_j(w)), \text{ all $j$}\}$. An
\it orthogeodesic \rm in $X_w$ is a geodesic from $\partial X_w$
to $\partial X_w$ perpendicular to $\partial X_w$. The dual of the
Voronoi decomposition is a decomposition $\mathcal C(d,w)$ of $X$
by a collection of disjoint embedded orthogeodesics arcs $\{s'\}$
constructed as follows. If $s \subset R_i(w) \cap R_j(w)$ is a
geodesic segment,  take a point $p \in S$ and consider the two
shortest geodesics $b_i$ and $b_j$ in $R_i(w)$ and $R_j(w)$
respectively from $p$ to $\partial H_i(w)$ and $\partial H_j(w)$.
The shortest orthogeodesic $s'$ in $X_w$ homotopic to $b^{-1}_i*
b_j$ is an arc in $\mathcal D(d)$ dual to $s$.
 A \it
Delaunay triangulation of $X_w$ \rm is a further decomposition of
$\mathcal C(d,w)$ by decomposing all non-hexagonal 2-cells by
orthogeodesic.  Since each orthogeodesic extends to a complete
geodesic from cusp to cusp, one obtains a \it Delaunay
triangulation $\T(d,w)$ \rm of the decorated metric $(d, w)$ on
$\Sigma$ by extension.   For a generic metric $(d, w)$, a Delaunay
triangulation is the dual to the Voronoi decomposition.
 By the
definition of Voronoi cells and lemma \ref{l1}, Delaunay
triangulations  of $(d, w)$ and $(d, w/k)$ are the same when
$k>1$. Due to this, for a general decorated metric $(d, w)$, we
define a Delaunay triangulation of $(d, w)$ to be that of $(d,
w/k)$ for $k$ large.



For a given triangulation $\T$ of $(S,V)$, let $D(\T)$ be the set
of all equivalence classes of decorated hyperbolic metrics $(d,w)$
in $T_D(\Sigma)$ so that  $\T$ is isotopic to a Delaunay
triangulation of $(d,w)$. Penner proved the following important
theorem in \cite{penner}. Details on the real analytic
diffeomorphism part of the decomposition can be found in
\cite{guoluo}.
\begin{theorem} (Penner)  The decorated Teichm\"uller space $T_D(\Sigma)$
has a real analytic cell decomposition by $\{D(\T) | \T\}$ and
$$ T_D(\Sigma) =\cup_{[\T]} D(\T)$$ where
the union is over all isotopy classes
of triangulations. The decomposition is invariant under the action
of the mapping class group.
\end{theorem}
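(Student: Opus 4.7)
The plan is to follow Penner's convex-hull construction in Minkowski 3-space $\R^{2,1}$ and then mimic Rivin's cell-straightening argument (recalled before the statement in the PL case). First I would represent the universal cover of a decorated metric $(d,w)$ in the hyperboloid model $\H^2 \subset \R^{2,1}$ and lift each decorated cusp to a point on the forward light cone whose Minkowski norm with a reference basepoint records the horoball size. The boundary of the convex hull $C$ of these light-cone points is a polyhedral surface invariant under the holonomy representation; its downward triangular faces project radially to ideal triangles in $\H^2$ and descend to an ideal triangulation of $\Sigma$ which, by construction, is precisely the Delaunay triangulation $\T(d,w)$. This already gives $T_D(\Sigma) = \cup_{[\T]} D(\T)$.

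Next I would translate the Delaunay condition into $\lambda$-length coordinates $x = \Psi_{\T}^{-1}([(d,w)]) \in \R_{>0}^{E(\T)}$. At an edge $e$ of $\T$ with $\lambda$-length $L_0$ and adjacent triangles of $\lambda$-lengths $(L_0,L_1,L_2)$ and $(L_0,L_3,L_4)$, the convexity of $\partial C$ across the lift of $e$ reduces, via Penner's cosine law (\ref{1}) applied to the two horocyclic configurations on either side, to the real analytic inequality
\[
h_e(x) \;:=\; \frac{L_1^2+L_2^2-L_0^2}{L_1 L_2} \,+\, \frac{L_3^2+L_4^2-L_0^2}{L_3 L_4} \;\ge\; 0,
\]
formally identical to (\ref{2.44}) but in $\lambda$-lengths. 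Hence $\Psi_{\T}^{-1}(D(\T))$ is the closed subset of $\R_{>0}^{E(\T)}$ cut out by the finite collection $\{h_e \ge 0 : e \in E(\T)\}$.

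To see that $D(\T)$ is a real analytic cell of dimension $-3\chi(\Sigma)$, I would straighten these inequalities by a change of coordinates analogous to Rivin's. Replace $x$ by the vector of horocycle arc lengths $a_i$ at all corners (each given in terms of $\lambda$-lengths by (\ref{1})), together with one extra $\lambda$-length $L(e_1)$ pinning down the overall scale. A Jacobian computation using (\ref{1}) shows this is a real analytic diffeomorphism from $P(\T) = \Psi_\T(\R_{>0}^{E(\T)})$ onto an open subset of an affine subspace of $\R^{\text{corners}} \times \R$ (the affine constraints being the triangle relations $\ln a_i + \ln a_j = -\ell_k$), and on that subspace the Delaunay inequalities $h_e \ge 0$ become the linear inequalities $a_j + a_k \le a_p + a_q$, one for each edge. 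Thus $\Psi_\T^{-1}(D(\T))$ is carried diffeomorphically onto a convex polytope, exhibiting $D(\T)$ as a real analytic cell.

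Finally I would verify the face-to-face property and mapping class group invariance. If $[(d,w)] \in D(\T) \cap D(\T')$, the Minkowski convex hull $C$ has a coarsest face structure $\mathcal C(d,w)$ refined by both $\T$ and $\T'$, and the edges of $\T$ or $\T'$ not in $\mathcal C(d,w)$ are exactly those at which the dihedral angle of $\partial C$ is flat, i.e. where $h_e = 0$; so $D(\T) \cap D(\T')$ is a common face of both cells defined by $\{h_e = 0 : e \in (E(\T)\cup E(\T')) \setminus E(\mathcal C(d,w))\}$. Local finiteness follows from the hyperbolic analog of Lemma \ref{diagonalswitch}: any two Delaunay triangulations of a fixed $(d,w)$ are joined by a sequence of diagonal switches. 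Equivariance under the mapping class group is automatic, since both $\Psi_{\T}$ and the Delaunay property are intrinsic to the geometry. The main obstacle will be the cell-straightening step: proving rigorously that the corner-arc-length map is a real analytic diffeomorphism of $P(\T)$ onto a codimension-$|V|$ affine slice and that under it $D(\T)$ becomes a convex polytope. The cleanest route is via the Minkowski convex-hull picture, where the $h_e$'s appear as second differences of a linear height function on $C$; an alternative is to invoke the strict convexity of Penner's hyperbolic-volume functional on $\lambda$-length space, whose critical-point equations supply both the affine constraints and the required convexity.
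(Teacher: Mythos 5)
The paper does not actually prove this statement: it is quoted from Penner \cite{penner}, with the real-analytic cell structure referred to \cite{guoluo}. Measured against that, your overall strategy --- the Epstein--Penner convex hull of the light-cone lifts of the decorated cusps, whose boundary faces project to the Delaunay tessellation and whose bending across an edge is detected by $h_e \ge 0$ --- is exactly the standard route, and your inequality $h_e \ge 0$ agrees with the paper's (\ref{3.5}). Two glossed points are standard but not automatic: that the convex-hull face decomposition coincides with the dual-Voronoi Delaunay tessellation of \S 3.2 (you assert this ``by construction''), and local finiteness of the cell covering.

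The step that does not work as written is the Rivin-style straightening. You map $\lambda$-lengths to the corner arc lengths $a_i$ (plus a scale), note that the Delaunay condition $a+a' \le b+b'+c+c'$ is linear in the corners, and conclude $D(\T)$ goes to a convex polytope. But the image of $\R_{>0}^{E(\T)}$ in corner space is not an affine subspace: by (\ref{1}), the condition that the two triangles adjacent to an edge assign it the same length reads $a_i a_j = a_{i'} a_{j'}$, which is affine in $\ln a$ but not in $a$, whereas the Delaunay inequalities are linear in $a$ but not in $\ln a$. Linear inequalities restricted to a non-affine subvariety do not yield a polytope, and this mismatch is exactly where the real content of the cell structure lies; compare the lemma in \S 5.2 of the paper, where convexity of even a single slice of $D(\T)$ is obtained only after the non-obvious substitution $\delta(v)=\lambda(v)^{-2}$. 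You flag this yourself as the main obstacle, and your proposed fixes (convexity from the Minkowski height function, or Penner's volume functional) are indeed where the argument must go --- but as it stands that step is a gap, not a proof. A smaller imprecision: the face $D(\T)\cap D(\T')$ should be cut out by $h_e=0$ for all $e \notin E(\T)\cap E(\T')$ uniformly over the intersection, not by the edges absent from $\mathcal C(d,w)$ at one chosen point, since the latter set can vary from point to point.
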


\subsection{Finite set of Delaunay triangulations}

We thank B. Springborn for informing us the following result was
known before and was a theorem of Akiyoshi \cite{ak}. However, our
proof is different and short. For completeness, we present our
proof in the appendix.  The theorem holds for decorated finite
volume hyperbolic manifolds of any dimension.

\begin{theorem}(Akiyoshi)\label{dau} For any finite area complete hyperbolic metric $d$
on $\Sigma$, there are only finitely many isotopy classes of
triangulations $\T$ so that $([d] \times \R_{>0}^n) \cap D(\T)
\neq \emptyset$. In particular, there exist triangulations $\T_1$,
..., $\T_k$ so that
 for any $w \in R_{>0}^n$, any Delaunay
triangulation  $(d, w)$ is isotopic to one of
$\T_i$. 
\end{theorem}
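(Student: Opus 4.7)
My plan is to use the Epstein--Penner convex-hull interpretation of the Delaunay tessellation. Realize $(\Sigma, d) = \mathbf{H}^2 / \Gamma$ via the hyperboloid model in Minkowski 3-space $\R^{2,1}$, so that the $n$ cusps of $\Sigma$ correspond to $n$ $\Gamma$-orbits of rays in the positive light cone $L^+$. A decoration $w \in \R_{>0}^n$ selects, $\Gamma$-equivariantly, one null vector on each cusp orbit, producing a $\Gamma$-invariant discrete set $B(w) \subset L^+$; by the Epstein--Penner theorem, $\mathcal C(d,w)$ is the radial projection to $\Sigma$ of the boundary faces of the closed convex hull of $B(w)$. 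Scaling a single coordinate $w_i$ rescales the $i$-th orbit along its null directions, and uniform scaling does not change the projection (compatible with Lemma~\ref{l1}). Hence one may work in the $(n-1)$-dimensional projectivized parameter space $P = \R_{>0}^n / \R_{>0}$.

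By Penner's theorem the cell decomposition $\{D(\T)\}$ of $T_D(\Sigma)$ is a real-analytic, locally finite cell complex. Pulling back along the slice $\{[d]\}\times\R_{>0}^n$ yields a locally finite real-analytic cell decomposition of $P$ into the pieces $D(\T) \cap P$. Thus the theorem reduces to showing that this decomposition has only finitely many cells, which in turn reduces to controlling what happens as $w$ approaches the boundary of $P$ (some coordinate going to $0$ or $\infty$).

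The heart of the argument is an asymptotic-stabilization statement: along any boundary direction of $P$---say $w_i/w_j \to \infty$ for some $i,j$---the combinatorial type of $\operatorname{conv}(B(w))$ eventually becomes constant. Geometrically, as $w_i$ becomes extreme, the $i$-th orbit of horoballs either dominates its neighbors (when its null vectors are pulled toward the origin) or is occluded by them (when they are pushed out along the light rays), so the portion of the convex hull near the $i$-th cusp orbit stabilizes to a definite local pattern (e.g.\ a fan). Iterating on the remaining $n-1$ orbits by induction on the number of cusps---with the base case $n=1$ trivial since $\R_{>0}^1/\R_{>0}$ is a point---shows that only finitely many limiting combinatorial types arise along any boundary approach.

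The main obstacle is making this stabilization rigorous. Each transition between combinatorial types in $P$ occurs on a real-analytic codimension-one wall, corresponding to four decorated cusp vectors becoming cospherical; by Penner's cosine law~(\ref{1}) this is exactly the borderline Delaunay equality $\alpha+\alpha'=\pi$. One must show that only finitely many such walls accumulate at the boundary of $P$. I would attack this either by a direct $\lambda$-length asymptotic analysis---bounding, uniformly in $w$ normalized to $P$, the hyperbolic truncated length (length outside a fixed family of small horocycles) of any arc that can serve as a Delaunay edge, and then using the well-known finiteness of geodesic arcs of bounded truncated length on $\Sigma$---or by an explicit compactification of $P$ on which the decomposition extends continuously. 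Either route, combined with local finiteness, yields the desired global finiteness, and since a triangulation is determined by its set of edges, this also gives the stronger conclusion that only finitely many isotopy classes of geodesic arcs can occur as Delaunay edges for any decoration.
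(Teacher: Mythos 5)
Your framework (Epstein--Penner convex hull, scaling invariance, reduction to the projectivized decoration space $P=\R_{>0}^n/\R_{>0}$, and the observation that by local finiteness of Penner's decomposition the whole issue lives at the boundary of $P$) is correct and is consistent with the paper's point of view. But the proof has a genuine gap at exactly the point you flag yourself: the ``asymptotic stabilization'' of the combinatorics as some ratio $w_i/w_j$ degenerates is asserted on heuristic grounds (``dominates or is occluded \dots stabilizes to a fan'') and then the two possible ways of making it rigorous are only named, not carried out. That stabilization/finiteness statement \emph{is} the theorem; everything before it is bookkeeping. Moreover, even granting stabilization along each boundary ray, finiteness over all of $P$ would still need a compactness argument over all boundary approach directions (sequences need not follow rays), and the proposed induction on the number of cusps is not set up: it is not said what decorated surface with $n-1$ cusps the inductive hypothesis is applied to, and deleting a cusp changes $\Sigma$.

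For comparison, your route (i) is essentially what the paper does, and seeing how it is executed shows why the uniform truncated-length bound cannot simply be asserted. The paper argues by contradiction with a normalized sequence $w^{(m)}$, splitting the cusps into those whose weights stay bounded away from $0$ and those whose weights tend to $0$. For edges joining two cusps of the first type, a diameter bound on a fixed compact truncated surface gives the length bound directly. The delicate case is an edge from a surviving cusp $v_i$ to a degenerating cusp $v_j$: there the naive diameter bound blows up, and the paper instead shows that the Voronoi cell of $v_j$ retreats into the cusp, that the equidistant loops bounding it have length tending to $0$ so all dual edges of that cell have asymptotically equal truncated length, and then invokes a separate ``gap'' lemma (finitely many shortest geodesics between two cusps, plus a definite length gap $\delta_{ij}$ below which a nearly-shortest geodesic must be shortest) to conclude every such Delaunay edge is eventually one of finitely many shortest geodesics. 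None of this second, harder half appears in your argument, so as written the proposal is a plausible plan rather than a proof.
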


\section{Euclidean polyhedral metrics and decorated hyperbolic
metrics}

The relationship between edge length coordinate of PL metrics with
that of $\lambda$-length was first noticed in \cite{bps}. Fix a
triangulation $\T$ of $(S,V)$, we have two coordinate maps
$\Phi^{-1}_{\T}: P(\T) \to \R^{E(\T)}$ and  $\Psi_{\T}: \R^{E(\T)}
\to T_D(S,V)$. Consider the injective map $A_{\T}: P(\T) \to
T_D(\Sigma)$ defined by $\Psi_{\T} \circ \Phi_{\T}^{-1}$.

\begin{theorem} $A_{\T}|_{D_{pl}(\T)}$ is a real analytic diffeomorphism  from $D_{pl}(\T)$ onto
$D(\T)$.
\end{theorem}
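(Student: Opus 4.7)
The plan is to split the statement into three assertions: real analyticity, injectivity, and surjectivity onto $D(\T)$, and to identify the heart of the proof as the matching of the two Delaunay conditions under the length-coordinate / $\lambda$-length-coordinate dictionary.

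First I would observe that the real analyticity and diffeomorphism properties of $A_\T$ on its entire domain $P(\T)$ are immediate from the definitions. The inverse chart $\Phi_\T^{-1}\colon P(\T)\to \R^{E(\T)}_\Delta$ is a real analytic diffeomorphism, as is Penner's $\lambda$-length parametrization $\Psi_\T\colon \R^{E(\T)}_{>0}\to T_D(\Sigma)$. Since $\R^{E(\T)}_\Delta\subset\R^{E(\T)}_{>0}$, the composition $A_\T=\Psi_\T\circ\Phi_\T^{-1}$ is a real analytic diffeomorphism from $P(\T)$ onto its image in $T_D(\Sigma)$. Restricted to the subset $D_{pl}(\T)\subset P(\T)$ it is then automatically a real analytic diffeomorphism onto $A_\T(D_{pl}(\T))$. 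All that remains is to identify this image with $D(\T)$.

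The substantive step is the identification $A_\T(D_{pl}(\T))=D(\T)$. Under length coordinates, membership in $D_{pl}(\T)$ is captured by the PL Delaunay inequality (\ref{2.44}) at every edge $e\in E(\T)$, together with $x\in \R^{E(\T)}_\Delta$. Under $\lambda$-length coordinates, membership in $D(\T)$ is characterized geometrically through the horoball Voronoi cells and their dual orthogeodesics described in Section~3.2. The crucial input, from \cite{bps}, is that the decorated Delaunay condition at an edge $e$ with adjacent $\lambda$-lengths $(y_0,y_1,y_2)$ and $(y_0,y_3,y_4)$, when expanded using Penner's cosine law (\ref{1}) and the formula for the signed orthogeodesic dual to $e$, becomes algebraically the same inequality
\[
\frac{y_1^2+y_2^2-y_0^2}{2y_1y_2}+\frac{y_3^2+y_4^2-y_0^2}{2y_3y_4}\ \geq\ 0
\]
as the Euclidean PL Delaunay inequality (\ref{2.44}). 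Moreover, this inequality together with positivity constrains the $\lambda$-lengths to $\R^{E(\T)}_\Delta$. Hence $\Phi_\T^{-1}(D_{pl}(\T))$ and $\Psi_\T^{-1}(D(\T))$ coincide as subsets of $\R^{E(\T)}_{>0}$, giving $A_\T(D_{pl}(\T))=D(\T)$.

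The main obstacle, and the only place where genuine content enters, is this algebraic identification of the two Delaunay inequalities. It rests on a direct geometric computation, going back to \cite{bps}, of the length of the orthogeodesic across an edge $e$ in the horocycle Voronoi dual, expressed purely in the $\lambda$-lengths of the adjacent decorated ideal triangles via (\ref{1}). Granting that computation, the theorem follows by a diagram chase: both $D_{pl}(\T)$ and $D(\T)$ are the images under real analytic diffeomorphisms of the same subset of $\R^{E(\T)}_\Delta$, and $A_\T$ is precisely the change-of-parametrization identifying them.
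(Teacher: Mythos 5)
Your overall route is the same as the paper's: reduce to showing $\Phi_{\T}^{-1}(D_{pl}(\T))=\Psi_{\T}^{-1}(D(\T))$ and observe that the PL Delaunay inequality (\ref{2.44}) and the decorated (Penner) Delaunay inequality, expanded via the cosine law (\ref{1}), are literally the same algebraic condition on the edge/\,$\lambda$-length vector. That part of your argument is fine, as is the remark that real analyticity of $A_{\T}$ is automatic once the two preimages are shown to coincide.

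There is, however, a genuine gap at the one step you dismiss in a single sentence: ``this inequality together with positivity constrains the $\lambda$-lengths to $\R^{E(\T)}_\Delta$.'' This is exactly the asymmetry between the two sides. The set $\Psi_{\T}^{-1}(D(\T))$ is cut out of all of $\R^{E(\T)}_{>0}$ by the Delaunay inequalities alone (any positive $\lambda$-lengths yield a decorated ideal triangle), whereas $\Phi_{\T}^{-1}(D_{pl}(\T))$ additionally requires the Euclidean triangle inequalities (\ref{2.47}). So the matching of inequalities only gives the inclusion $\Phi_{\T}^{-1}(D_{pl}(\T))\subset\Psi_{\T}^{-1}(D(\T))$, i.e.\ $A_{\T}(D_{pl}(\T))\subset D(\T)$; surjectivity onto $D(\T)$ is precisely the claim that the Delaunay inequalities at \emph{all} edges force the triangle inequality in \emph{every} triangle, and this needs a proof --- it is false edge-by-edge and is not a formal consequence of positivity. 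The paper isolates this as Lemma \ref{deltri} and proves it by an extremal argument: if some triangle violates the triangle inequality, pick a ``bad'' edge $e$ of maximal length; maximality forces $|x_3-x_4|<x_0$ in the neighboring triangle, so the right-hand side of the Delaunay inequality at $e$ is strictly less than $1$ while the violated triangle inequality makes the left-hand side at least $1$, a contradiction. Without this lemma (or an equivalent argument) your proof establishes only that $A_{\T}$ is a real analytic diffeomorphism of $D_{pl}(\T)$ onto a subset of $D(\T)$, not onto $D(\T)$.
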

\begin{proof}  To see that $A_{\T}$ maps $D_{pl}(\T)$ bijectively onto
$D(\T)$, it suffices to
 show that $\Phi_{\T}^{-1}(D_{pl}(\T)) =
\Psi_{\T}^{-1}(D(\T))$.

Recall that the characterization of a PL metric $d$ which is
Delaunay in $\T$ in terms of  $x =\Phi_{\T}^{-1}(d)$ is as
follows. Take an edge $e \in E(\T)$ and let $t$ and $t'$ be the
triangles adjacent to $e$ so that $e, e_1, e_2$ are edges of $t$
and $e, e_3, e_4$ are the edge of $t'$. Suppose $\alpha, \alpha'$
are the angles (measured in $d$) in $t$ and $t'$ facing $e$. Then
the Delaunay condition is equivalent to
\begin{equation}\label{delaunay}
\alpha + \alpha' \leq \pi, \quad \text{i.e.,} \quad
\cos(\alpha)+\cos(\alpha') \geq 0, \quad \text{for all edges $e
\in E(\T)$}.
\end{equation}
 Suppose the
length of $e$ (in $d$) is $x_0$ and the length of $e_i$ is $x_i$,
$i=1,...,4$. By the cosine law, Delaunay condition
(\ref{delaunay}) is the same as

\begin{equation} \label{2.46}
 \frac{x_1^2+x_2^2-x_0^2}{2 x_1 x_2} + \frac{x_3^2 + x_4^2
-x_0^2}{2 x_3 x_4} \geq 0, \quad \text{for all edges $e \in
E(\T)$}.
\end{equation}

This shows that $$\Phi_{\T}^{-1}(D_{pl}(\T))=\{ x \in \R_{>0}^E |
\text{ (\ref{2.46}) holds for each edge $e$, and  (\ref{2.47})
holds for each triangle}\}$$ where
\begin{equation}\label{2.47}
x(e_i)+x(e_j)> x(e_k),  \quad \text{ $e_i, e_j, e_k$ form edges of
a triangle in $\T$}. \end{equation}

\begin{lemma}\label{deltri}  Suppose $x: E(\T) \to \R_{>0}$ so
that (\ref{2.46}) holds for all edges. Then (\ref{2.47}) holds for
all triangles.
\end{lemma}
\begin{proof}
Suppose otherwise, there exists  $x \in \R_{>0}^E$ so that
(\ref{2.46}) holds but there is a triangle with edges $e_i, e_j,
e_k$ so that \begin{equation}\label{15} x(e_i) \geq x(e_j) +
x(e_k).
\end{equation} In this case, we say $e_i$ is a "bad" edge. Let $e$
be a "bad" edge of the largest $x$ value, i.e., $x(e) =\max\{
x(e_i) |$ (\ref{15}) holds\}. Let $t$, $t'$ be the triangles
adjacent to $e$ and the edges of $t$ and $t'$ be $\{e, e_1, e_2\}$
and $\{e, e_3, e_4\}$. Note that $t'=t$ is allowed if $e$ is
adjacent to one triangle. Let $x_0 =x(e)$, $x_i =x(e_i)$ for
$i=1,2,3,4$. Without loss of generality we may assume that
\begin{equation} \label{3.1}
 x_1+x_2 \leq x_0.
\end{equation}
Since $e$ is a "bad" edge of the largest $x$ value, we have $x_3 <
x_0+x_4$ and $x_4 < x_0 +x_3$, i.e.,
\begin{equation}\label{3.2}
|x_3 -x_4|< x_0. \end{equation}

On the other hand, inequality (\ref{2.46}) holds for $x_0, x_1,
..., x_4$, i.e.,
\begin{equation}\label{3.6}
\frac{x_0^2 -x_1^2-x_2^2}{2 x_1 x_2} \leq \frac{x_3^2 +x_4^2
-x_0^2}{2 x_3 x_4}. \end{equation}

Inequality (\ref{3.1}) says the left-hand-side of (\ref{3.6}) is
at least 1 and inequality (\ref{3.2}) says the right-hand-side of
(\ref{3.6}) is strictly less than 1. This is a contradiction.
$\square$
\end{proof}

The space $ \Psi_{\T}^{-1}(D(\T))$ can be characterized as
follows. Suppose the $\lambda$-length of $(d',w)
\in D(\T)$ is $x=\Psi_{\T}^{-1}(d',w)$. 
For each edge $e$ in $(S, \T, d')$, let $a,a'$ be the two angles
facing $e$ and $b,b', c, c'$ be the angles adjacent to the edge
$e$.  Then $(d',w)$ is Delaunay in $\T$ if and only if for each
edge $e \in E(\T)$ (see \cite{penner} or \cite{guoluo}),
\begin{equation} \label{2}
 a+a' \leq b+b'+c+c'.
\end{equation}

Let $t$ and $t'$ be the triangle adjacent to $e$ and $e, e_1, e_2$
be edges of $t$ and $e, e_3, e_4$ be the edges of $t'$. Let the
$\lambda$-length of $e$ be $x_0$ and the $\lambda$-length of $e_i$
be $x_i$. Then using the cosine law (\ref{1}), one sees that
(\ref{2}) is equivalent to
\begin{equation} \label{4}
\frac{x_0^2}{x_1 x_2}+\frac{x_0^2}{x_3 x_4}
 \leq
\frac{x_1}{x_2}+\frac{x_2}{x_1}+\frac{x_3}{x_4}+\frac{x_4}{x_3},
\quad  \text{for each $e \in E(\T)$}.
\end{equation}

Inequality (\ref{4}) is equivalent to
\begin{equation} \label{3.5}
0 \leq  \frac{x_1^2+x_2^2-x_0^2}{2 x_1 x_2} + \frac{x_3^2 + x_4^2
-x_0^2}{2 x_3 x_4}, \quad \text{for each $e \in E(\T)$}.
\end{equation}

Therefore, $$ \Psi_{\T}^{-1}(D(\T))=\{ x \in \R_{>0}^E | \text{
(\ref{3.5}) holds at each edge $e\in E(\T)$}\}.$$

However, inequality (\ref{3.5}) is the same as (\ref{2.46}). This
shows $\Phi_{\T}^{-1}(D_{pl}(\T)) \subset \Psi_{\T}^{-1}(D(\T))$.
On the other hand, lemma \ref{deltri} implies that
$\Phi_{\T}^{-1}(D_{pl}(\T)) = \Psi_{\T}^{-1}(D(\T))$.

Finally, since both $\Phi_{\T}$ and $\Psi_{\T}$ are real analytic
diffeomorphisms and $A_{\T}=\Psi_{\T} \circ \Phi_{\T}^{-1}$ and
$A^{-1}_{\T} =\Phi_{\T} \circ \Psi^{-1}_{\T}$, we see that
$A_{\T}$ is a real analytic diffeomorphism. $\square$
\end{proof}

\subsection{Globally defined map, diagonal switch and Ptolemy relation}

\begin{theorem} Suppose $\T$ and $\T'$ are two triangulations of
$(S,V)$ so that $D_{pl}(\T) \cap D_{pl}(\T') \neq \emptyset$. Then
\begin{equation}\label{343} A_{\T}|_{D_{pl}(\T) \cap
D_{pl}(\T')} = A_{\T'}|_{D_{pl}(\T) \cap D_{pl}(\T')}.
\end{equation}
 In particular, the gluing of these $\A_{\T}|_{D_{pl}(\T)}$
mappings produces a homeomorphism $\A=\cup_{\T}
\A_{\T}|_{D_{pl}(\T)}: T_{pl}(S,V) \to T_D(S-V)$ such that
$A([d])$ and $A([d'])$ have the same underlying hyperbolic
structure if and only if $d$ and $d'$ are discrete conformal.
\end{theorem}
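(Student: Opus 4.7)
The proof splits into three parts: (i) the overlap consistency (\ref{343}), (ii) the gluing yielding a homeomorphism, and (iii) the characterization of discrete conformality. My plan is to address these in turn, with the main work concentrated on (i) and on the ``$\Rightarrow$'' direction of (iii).

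For (i), if $[d] \in D_{pl}(\T) \cap D_{pl}(\T')$, then both $\T$ and $\T'$ are Delaunay triangulations of $d$, so by Lemma \ref{diagonalswitch} they are connected by a finite sequence of Delaunay triangulations of $d$, each obtained from its predecessor by a single diagonal switch. It therefore suffices to verify $A_{\T}([d])=A_{\T'}([d])$ when $\T'$ is a single diagonal switch of $\T$ at an edge $e$. Since both diagonals of the quadrilateral $t\cup_e t'$ are then Delaunay in $d$, the two angles facing $e$ sum to exactly $\pi$, so the quadrilateral is inscribed in a Euclidean circle. Classical Ptolemy gives $x_0 f = x_1 x_3 + x_2 x_4$ for the edge lengths. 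On the decorated hyperbolic side, Penner's Ptolemy identity for decorated ideal quadrilaterals gives the same algebraic relation $x_0 f' = x_1 x_3 + x_2 x_4$ for the $\lambda$-length $f'$ of the switched diagonal inside $\Psi_{\T}(x)$. Hence $f=f'$ and $\Psi_{\T}(x)=\Psi_{\T'}(x')$, i.e., $A_{\T}([d])=A_{\T'}([d])$.

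With (\ref{343}) established, the glued map $A:T_{pl}(S,V)\to T_D(\Sigma)$ is well-defined and continuous. The cell decomposition (\ref{2.1}) of $T_{pl}(S,V)$ and Penner's cell decomposition of $T_D(\Sigma)$ are matched cell-by-cell via the previous theorem, which supplies the real analytic diffeomorphisms $A_{\T}:D_{pl}(\T)\to D(\T)$. The key observation is that the PL-Delaunay condition (\ref{2.46}) and the hyperbolic-Delaunay condition (\ref{3.5}) are literally the same inequalities in the length / $\lambda$-length coordinates, so the face structure of the two cell complexes is preserved by $A_{\T}$, giving $A_{\T}(D_{pl}(\T)\cap D_{pl}(\T'))=D(\T)\cap D(\T')$. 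Injectivity of $A$ then follows: if $A([d_1])=A([d_2])=p$ with $[d_i]\in D_{pl}(\T_i)$, then $p\in D(\T_1)\cap D(\T_2)$, so $[d_1]=A_{\T_1}^{-1}(p)\in D_{pl}(\T_1)\cap D_{pl}(\T_2)$ and (\ref{343}) gives $[d_1]=A_{\T_2}^{-1}(p)=[d_2]$. Surjectivity is immediate from Penner's decomposition, and the inverse of $A$ is the gluing of the $A_{\T}^{-1}$, making $A$ a homeomorphism.

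For (iii), in the ``$\Leftarrow$'' direction each step of a discrete conformal sequence preserves the underlying hyperbolic metric: case (c) is an isometry, and for case (b) the rescaling $l_e \to l_e\,e^{u(v)+u(v')}$ corresponds under $A_{\T}$ to shrinking the horoball at each $v$ by horocycle-distance $u(v)$; by Lemma \ref{l1} this scales $w_v$ by $e^{-2u(v)}$ while multiplying each incident $\lambda$-length by $e^{u(v)+u(v')}$, fixing the underlying hyperbolic structure. In the ``$\Rightarrow$'' direction, suppose $A([d])=(d_{hyp},w)$ and $A([d'])=(d_{hyp},w')$ and connect $w$ to $w'$ by a generic path in $\R_{>0}^{n}$. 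By Akiyoshi's theorem (Theorem \ref{dau}) only finitely many cells $D(\T_i)$ meet the fiber $\{[d_{hyp}]\}\times\R_{>0}^{n}$, so the path crosses only finitely many walls. Inside each cell the variation is a horocycle rescaling, realizing a discrete conformal change of type (b) via the correspondence above; at each wall crossing the consistency (\ref{343}) forces the two adjacent $A_{\T}^{-1}$ to yield the same PL metric, realizing condition (c). Concatenating these steps exhibits $[d]$ and $[d']$ as discrete conformal. The main obstacle is the consistency statement (\ref{343}) itself, which rests on the striking algebraic coincidence, originally noted by Bobenko-Pinkall-Springborn, that the classical Ptolemy identity for Euclidean cyclic quadrilaterals and Penner's Ptolemy identity for $\lambda$-lengths of decorated ideal quadrilaterals are formally the same; once this identification and the matching of the two cell decompositions are in place, the remaining steps are essentially formal.
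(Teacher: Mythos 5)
Your proposal is correct and follows essentially the same route as the paper: reduction to a single diagonal switch among Delaunay triangulations, the observation that on the overlap the quadrilateral is cyclic so the Euclidean diagonal-switch formula becomes the Ptolemy identity and hence coincides with Penner's $\lambda$-length Ptolemy identity, gluing of the cell-wise maps together with their inverses to get a homeomorphism, and the generic-path-through-finitely-many-cells argument for the discrete conformality characterization. The only differences are cosmetic, e.g.\ you invoke Akiyoshi's theorem for the finiteness of cells meeting the fiber where the paper uses transversality of a generic path, and you spell out the horoball-rescaling mechanism that the paper compresses into ``due to the definition of $\Psi_{\T}^{-1}\Phi_{\T}$.''
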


\begin{proof} Suppose $[d] \in D_{pl}(\T) \cap D_{pl}(\T')$, i.e.,
$\T$ and $\T'$ are both Delaunay in the PL metric $d$. Then it is
known that there exists a sequence of triangulations $\T_1 =\T,
\T_2, ..., T_k =\T'$ on $(S,V)$ so that each $\T_i$ is Delaunay in
$d$ and $\T_{i+1}$ is obtained from $\T_i$ by a diagonal switch.
In particular, $A_{\T}([d]) =A_{\T'}([d])$ follows from
$A_{\T_i}([d])=A_{\T_{i+1}}([d])$ for $i=1,2,..., k-1$.
 Thus, it suffices to show $A_{\T}([d]) =A_{\T'}([d])$ when $\T'$ is obtained from
$\T$ by a diagonal switch along an edge $e$. In this case the
transition functions $\Phi_{\T}^{-1}\Phi_{\T'}$ and
$\Psi_{\T}^{-1} \Psi_{\T'}$ are the diagonal switch formulas.
Penner proved an amazing result  that the $\lambda$-lengths
satisfy the Ptolemy identity for decorated
ideal quadrilaterals. See \cite{penner} and figure~\ref{figure_4}. 
  This
result, translated into the language of length coordinates, says
that $\Phi_{\T}^{-1}\Phi_{\T'}(x) =\Psi_{\T}^{-1} \Psi_{\T'}(x)$
for $x \in \Phi^{-1}_{\T} (D_{pl}(\T) \cap D_{pl}(\T'))$. This is
the same as (\ref{343}).  Taking the inverse, we obtain
\begin{equation}\label{344} A_{\T}^{-1}|_{D(\T) \cap D(\T')} =
A_{\T'}^{-1}|_{D(\T) \cap D(\T')}. \end{equation}

\begin{lemma} (a) $D_{pl}(\T) \cap
D_{pl}(\T') \neq \emptyset$ if and only if $D(\T) \cap D(\T') \neq
\emptyset$.

(b) The gluing map $\A =\cup_{\T} \A_{\T}|_{D_{pl}(\T)}: T_{pl}
\to T_D$ is a homeomorphism invariant under the action of the
mapping class group.
\end{lemma}
\begin{proof} By (\ref{343}) and (\ref{344}), the maps  $\A =\cup_{\T} \A_{\T}|_{D_{pl}(\T)}: T_{pl} \to T_D$
and $\B =\cup_{\T} \A^{-1}_{\T}|_{D(\T)}: T_D \to T_{pl}$ are well
defined and  continuous. Since $\A(D_{pl}(\T) \cap D_{pl}(\T') )
\subset D(\T) \cap D(\T')$ and $\B(D(\T) \cap D(\T')) \subset
D_{pl}(\T) \cap D_{pl}(\T')$, part (a) follows. To see part (b),
since $T_D =\cup_{\T} D(\T)$, the map $\A$ is onto. To see $\A$ is
injective, suppose $x_1 \in D_{pl}(\T_1), x_2 \in D_{pl}(\T_2)$ so
that $\A(x_1) =\A(x_2) \in D(\T_1) \cap D(\T_2)$. Apply
(\ref{344}) to $\A^{-1}_{\T_1}|, \A^{-1}_{\T_2}|$ on the set
$D(\T_1) \cap D(\T_2)$ at the point $A(x_1)$, we conclude that
$x_1=x_2$. This shows that $\A$ is a bijection with inverse $\B$.
Since both $\A$ and $\B$ are continuous, $\A$ is a homeomorphism.
$\square$
\end{proof}

 Now if $d$ and $d'$ are two discrete
conformally equivalent PL metrics, then  $\A([d])$ and $\A([d'])$
are of the form $(p, w)$ and $(p, w')$ due to the definition of
$\Psi^{-1}_{\T} \Phi_{\T}$. On the other hand, if two PL metrics
$d, d'$ satisfy that $\A([d])$ and $\A([d'])$ are of the form $(p,
w)$ and $(p, w')$, consider a generic smooth  path $\gamma(t)=(p,
w(t)), t \in [0,1]$, in $T_D(\Sigma)$ from $(p,w)$ to $(p, w')$ so
that $\gamma(t)$ intersects the cells $D(\T)$'s transversely. This
implies that $\gamma$ passes through a finite set of cells
$D(\T_i)$ and $\T_j$ and $\T_{j+1}$ are related by a diagonal
switch. Let
 $t_0=0<... <t_m=1$ be a partition of $[0,1]$ so that $\gamma([t_i,
t_{i+1}]) \subset D(\T_i)$. Say $d_i$ is the PL metric so that
$\A([d_i]) =\gamma(t_i) \in D(\T_i) \cap D(\T_{i+1})$, $d_1=d$ and
$d_m=d'$. Then by definition, the sequences $\{d_1, ..., d_m\}$
and the associated Delaunay triangulations $\{\T_1, ..., T_m\}$
satisfy the definition of discrete conformality for $d,d'$.
$\square$
\end{proof}

\begin{theorem} The homeomorphism $\A: T_{pl}(S,V) \to T_D(S-V)$
is a $C^1$ diffeomorphism.
\end{theorem}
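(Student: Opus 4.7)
The strategy is to promote the homeomorphism $\A$ from piecewise real analytic to globally $C^1$ by verifying that its analytic pieces glue smoothly across the walls of the cell decomposition $T_{pl}(S,V) = \cup_{[\T]} D_{pl}(\T)$. By the previous theorem each restriction $\A_\T = \A|_{D_{pl}(\T)}$ is already a real analytic diffeomorphism onto $D(\T)$, so since $C^1$-regularity is local, it suffices to verify derivative-matching between $\A_\T$ and $\A_{\T'}$ along each codimension-one wall $W = D_{pl}(\T) \cap D_{pl}(\T')$. Using Lemma~\ref{diagonalswitch}, I may reduce to the case where $\T$ and $\T'$ differ by a single diagonal switch along an edge $e$, with adjacent triangles $t,t'$ labelled (as in section 2.1) so that $E(t) = \{e, e_1, e_2\}$, $E(t') = \{e, e_3, e_4\}$, and $\{e_1, e_3\}, \{e_2, e_4\}$ form the opposite side pairs of the quadrilateral $t\cup_e t'$.

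I work on the common open neighborhood $U = P(\T) \cap P(\T')$ using the $\T$-charts $\Phi_\T^{-1}$ on $T_{pl}(S,V)$ and $\Psi_\T^{-1}$ on $T_D(S-V)$. Let $x_0$ denote the length of $e$ and $x_i$ the length of $e_i$ for $i=1,\dots,4$. In these coordinates $\A_\T$ is the identity map on $\R^{E(\T)}$, while $\A_{\T'}$ is computed by first applying the PL cosine-law diagonal switch $\Phi_{\T'}^{-1}\Phi_\T$ (which replaces $x_0$ by the new diagonal length $f(x)$) and then the Penner transition $\Psi_\T^{-1}\Psi_{\T'}$ (a Ptolemy inversion on the switched coordinate). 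A direct chain-of-charts calculation yields that $\A_\T(x) - \A_{\T'}(x)$ vanishes in every coordinate except the one corresponding to $e$, where it equals
\[
\frac{N(x)}{f(x)}, \qquad N(x) \;:=\; x_0 f(x) - x_1 x_3 - x_2 x_4.
\]

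The key geometric input is the classical Ptolemy inequality for convex planar Euclidean quadrilaterals: for $t \cup_e t'$ with diagonals of lengths $x_0$ and $f(x)$ and opposite side pairs $(x_1,x_3),(x_2,x_4)$, one has $x_0 f(x) \leq x_1 x_3 + x_2 x_4$, with equality if and only if the four vertices are cocircular --- which is precisely the condition $\alpha+\alpha'=\pi$ from \eqref{del} defining $W$. Hence $N \leq 0$ on $U$ with $N \equiv 0$ on $W$, so $N$ attains its maximum along $W$. The extremum principle forces $dN|_W = 0$, and since $N|_W = 0$ we then obtain $d(N/f)|_W = 0$. Thus $\A_\T$ and $\A_{\T'}$ coincide on $W$ together with their first-order derivatives, and their gluing is $C^1$ across the wall. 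Promoting $C^1$-regularity from codimension-one walls to higher-codimension strata is automatic because the condition is pointwise, and every stratum point has a neighborhood covered by top-dimensional cells whose pairwise codimension-one matchings have just been established.

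The main subtle step I expect is identifying the correct geometric identity linking the PL cosine-law diagonal switch to Penner's $\lambda$-length Ptolemy switch. Once one recognizes that Ptolemy's inequality for planar quadrilaterals precisely quantifies this defect --- saturating on $W$ and failing strictly elsewhere --- the vanishing of $dN$ on $W$ follows in one line from the extremum principle, and the global $C^1$-diffeomorphism property of $\A$ drops out immediately.
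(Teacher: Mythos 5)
Your proposal is correct, and while the outer skeleton matches the paper's (reduce via Lemma~\ref{diagonalswitch} to two triangulations related by a single diagonal switch, then show that $A_{\T}$ and $A_{\T'}$ agree to first order on the wall), your proof of the derivative-matching step is genuinely different. The paper isolates the key point as a lemma comparing the Euclidean diagonal-length function $A(x,y,z,w,a)$ with the Ptolemy expression $B=(xz+yw)/a$, and verifies $DA=DB$ at cocircular points by an explicit computation using the derivative cosine law $\partial\alpha/\partial x = x/(ay\sin\alpha)$ and the inscribed-angle relations. You instead note that the defect $N = x_0 f - x_1x_3 - x_2x_4$ is everywhere $\leq 0$ on the open set $U=P(\T)\cap P(\T')$ by Ptolemy's inequality for four points in the plane, and vanishes exactly on the cocircular locus; hence every wall point is an \emph{interior} maximum of $N$ and $dN=0$ there by the first-derivative test. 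Since $dN = (A-B)\,da + a\,(dA-dB)$ in the paper's notation, your conclusion $dN|_W=0$ is precisely the paper's lemma $DA=DB$ on $\{A=B\}$, and the passage to $d(N/f)|_W=0$ is then immediate. This is a cleaner, essentially computation-free route; what it requires in exchange is (i) the equality case of Ptolemy's inequality to identify $\{N=0\}$ with the cocircular locus, and (ii) the observation that the wall lies in the interior of $U$ so that the extremum is interior --- both of which hold here, since a quadrilateral with $\alpha+\alpha'=\pi$ is cyclic and convex, and $P(\T)$, $P(\T')$ are open charts. Your closing remark on higher-codimension strata is handled at the same (implicit) level of rigor as in the paper, where the chain of Delaunay diagonal switches through the common point supplies the equality of all relevant derivatives.
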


\begin{proof} It suffices to show that for a point $[d] \in D_{pl}(\T)
\cap \D_{pl}(\T')$, the derivatives $DA_{\T}[d])$ and
$DA_{\T'}([d])$ are the same. Since  both $\T$ and $\T'$ are
Delaunay in $d$ and are related by a sequence of Delaunay
triangulations (in $d$) $\T_1=\T, \T_2, ..., \T_k =\T'$,
$DA_{\T}([d])=DA_{\T'}([d])$ follows from $DA_{\T_i}([d])
=DA_{\T_{i+1}}([d])$ for $i=1,2,..., k-1$. Therefore, it suffices
to show $DA_{\T}([d]) = DA_{\T'}([d])$ when $\T$ and $\T'$ are
related by a diagonal switch at an edge $e$. In the coordinates
$\Phi_{\T}$ and $\Psi_{\T}$, the
 fact that $DA_{\T}([d]) = DA_{\T'}([d])$  is equivalent to  the following smoothness
question on the diagonal lengths.

\begin{lemma} Suppose $Q$ is a convex Euclidean quadrilateral
whose four edges are of lengths $x,y,z,w$ and the length of a
diagonal is $a$. See figure~\ref{figure_4}. Suppose $A(x,y,z,w,a)$ is
the length of the other diagonal and
$B(x,y,z,w,a)=\frac{xz+yw}{a}$. If a point $(x,y,z,w,a)$ satisfies
$A(x,y,z,w,a)=B(x,y,z,w,a)$, i.e., $Q$ is inscribed in a circle,
then $DA(x,y,z,w,a)=DB(x,y,z,w,a)$ where $DA$ is the derivative of
$A$.
\end{lemma}

\begin{figure}[ht!]
\centering
\includegraphics[scale=0.6]{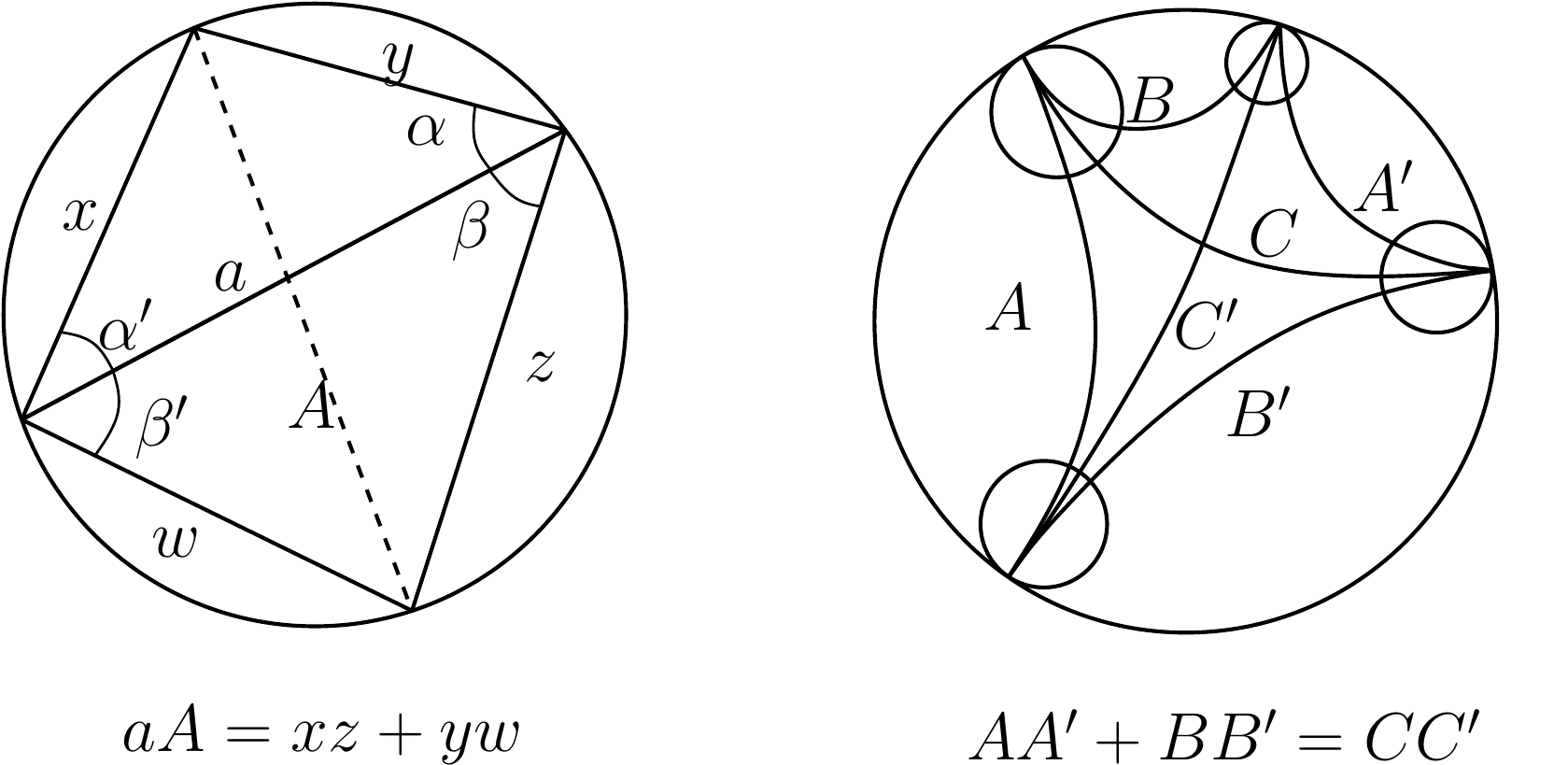}
\caption{Euclidean and hyperbolic Ptolemy}
\label{figure_4}
\end{figure}

\begin{proof}
The roles of $x,y,z,w$ are symmetric with respect to $a$. Hence it
suffices to show that $\frac{\partial A}{\partial
a}=\frac{\partial B}{\partial a}$ and $\frac{\partial A}{\partial
x}=\frac{\partial B}{\partial x}$ at these points. First, we have
$\frac{\partial B}{\partial x}=\frac{z}{a}$ and $\frac{\partial
B}{\partial a}= -\frac{B}{a}$.

Now let $\alpha, \alpha', \beta, \beta'$ be the angles formed by
the pairs of edges $\{y,a\}$, $\{a, x\}$, $\{a, z\}$ and
$\{a,w\}$.  By the cosine law, we have
$$ A^2 = y^2+z^2-2yz \cos(\alpha + \beta).$$
Take partial $x$ derivative of it. We obtain
$$ 2A \frac{\partial A}{\partial x} = 2 y z \sin(\alpha + \beta)
\frac{\partial \alpha}{\partial x}.$$ But it is well known (see
for instance \cite{luo1}) that in the triangle of lengths $x,y,a$,
\begin{equation}\label{123}
\frac{\partial \alpha}{\partial x}=\frac{x}{ay \sin(\alpha)}.
\end{equation}
Therefore, $$\frac{\partial A}{\partial
x}=\frac{xz\sin(\alpha+\beta)}{aA \sin(\alpha)}.$$ Now at the
point where $A(x,y,z,w,a)=B(x,y,z,w,a)$, the quadrilateral is
inscribed to the circle. Therefore,
$\frac{\sin(\alpha+\beta)}{\sin(\alpha)}=\frac{A}{x}$. By putting
these together, we see that $\frac{\partial A}{\partial
x}=\frac{xzA}{aAx} =\frac{z}{a} =\frac{\partial B}{\partial x}$.

Next, we calculate $\frac{\partial A}{\partial a}$. By the formula
above, we obtain $2A \frac{\partial A}{\partial a} = 2 yz
\sin(\alpha+\beta)(\frac{\partial \alpha}{\partial
a}+\frac{\partial \beta}{\partial a}).$ Now by the derivative
cosine law (\cite{chowluo}), we have $\frac{\partial
\alpha}{\partial a}=-\frac{\partial \alpha}{\partial
x}\cos(\alpha')$ which in turn is  $-\frac{x \cos(\alpha')}{a y
\sin(\alpha)}$ by (\ref{123}). Similarly, we have $\frac{\partial
\beta}{\partial a} = -\frac{w \cos(\beta')}{az \sin(\beta)}$.
Putting these together, we obtain,
$$ \frac{\partial A}{\partial a} = -\frac{yz \sin(\alpha+\beta)}{aA}
(\frac{x \cos(\alpha')}{y \sin(\alpha)} +\frac{w \cos(\beta')}{z
\sin(\beta)}).$$  Now since $A=B$, the quadrilateral is inscribed
in a circle, therefore, $\frac{\sin(\alpha+\beta)}{\sin(\alpha)}
=\frac{A}{x}$ and
$\frac{\sin(\alpha+\beta)}{\sin(\beta)}=\frac{A}{w}.$ Therefore,
$\frac{\partial A}{\partial a}=- \frac{1}{a}( z \cos(\alpha')+y
\cos(\beta')) =-\frac{A}{a}=-\frac{B}{a}=\frac{\partial
B}{\partial a}$ where the identity $A=z \cos(\alpha')+y
\cos(\beta')$ comes from the triangle of lengths $y,z,A$ and the
fact that $Q$ is inscribed in a circle. $\square$

\end{proof}

\end{proof}
\section{A proof of the main theorem}

Using the map $\A: T_{pl}(S,V) \to T_D(\Sigma)$, we see that for a
given PL metric $d$ on $(S,V)$, the set $\{[d']|\text{$d'$ is
discrete}$   $ \text{ conformal to $d$}\}$ is $C^1$-diffeomorphic
to $\{p\} \times \R_{>0}^n \subset T_D(S-V)$ for some $p \in
T(\Sigma)$. Therefore, the discrete uniformization theorem is
equivalent to a statement about the discrete curvature map defined
on $\{p\} \times \R_{>0}^n \subset
 T_D(S-V)$.  Let us make a change of variables from $w=(w_1, ...,
w_n) \in R_{>0}^n$ to $u=(u_1, ..., u_n) \in \R^n$ where $u_i
=\ln(w_i)$. We write $w=w(u)$. For a given $p \in T(\Sigma)$,
define the curvature map $\mathbf F: \R^n \to (-\infty, 2\pi)^n$
by
\begin{equation}\label{curv}\mathbf F(u) =K_{\A^{-1}(p, w(u))}
\end{equation} where $K_d$ is the discrete curvature.
 The map satisfies the property that $\mathbf
F(u+k(1,1,...,1))=\mathbf F(u)$ and $\mathbf F(u)$ lies in the
plane $GB=\{ x \in \R^n | \sum_{i=1}^n x_i =2\pi \chi(S)\}$
defined by the Gauss-Bonnet identity. Let $P=\{u \in \R^n |
\sum_{i=1}^n u_i=0\}$ and $Q =GB \cap (-\infty, 2\pi)^n$. Then the
restriction $F:=\mathbf F|_{P}: P \to Q$. The discrete
uniformization theorem is equivalent to say that $F: P \to Q$ is a
bijection. We will show that $F$ is a homeomorphism in this
section.

We will prove that $F: P \to Q$ is injective in \S 5.2 using a
variational principle developed in \cite{luo}.  Assuming
injectivity, we show that $F: P \to Q$ is onto in \S 5.1.

\subsection{The map $F$ is onto}

Assuming that $F$ is injective, we prove $F$ is onto in this
section. Since both $P$ and $Q$ are connected manifolds of
dimension $n-1$ and $F$ is injective and continuous, it follows
that $F(P)$ is open in $Q$. To show that $F$ is onto, it suffices
to prove that $F(P)$ is closed in $Q$.

To this end, take a sequence $\{u^{(m)}\}$ in $P$ which leaves
every compact set in $P$. We will show that $\{F(u^{(m)})\}$
leaves each compact set in $Q$. By taking subsequences, we may
assume that for each index $i=1,2,...,n$, the limit $\lim_{m}
u^{(m)}_i = t_i$ exists in $[-\infty, \infty]$. Furthermore, since
the space $\{p\} \times P$ is in the union of a finite set of
Delaunay cells $D(\T)$, we may  assume, after taking another
subsequence, that the corresponding PL metrics $d_m= \A^{-1}(p,
w(u^{(m)}))$ are Delaunay in one triangulation $\T$. We will do
our calculation in the length coordinate $\Phi_{\T}$ below.

Due to the normalization that $\sum_{i} u^{(m)}_i=0$ and $u^{(m)}$
does not converge to any vector in $P$, there exists $t_i=\infty$
and $t_j=-\infty$. Let us label vertices $v \in V$ by \it black
\rm and \it white \rm as follows. The vertex $v_i$ is black if and
only if $t_i=-\infty$ and all other vertices are white.

\begin{lemma} (a) There does not exist a triangle $\tau \in \T$ with
exactly two white vertices.

(b) If $\Delta v_1v_2v_3$  is a triangle in $\T$ with exactly one
white vertex at $v_1$, then the inner angle of the triangle at
$v_1$ converges to $0$ as $m \to \infty$ in the metrics $d_m$.

\end{lemma}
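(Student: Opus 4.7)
The plan is to transport everything into Penner's $\lambda$-length coordinates carried over to the PL side by the map $\A$. For the fixed hyperbolic structure $p$ and any reference decoration, every edge $e = v_jv_k \in E(\T)$ has PL length of the form $x_e(u) = C_e \,e^{-(u_j+u_k)/2}$ with $C_e > 0$ depending only on $p$. A short computation from this formula (or equivalently from Penner's cosine law combined with the Euclidean law of cosines) expresses the inner Euclidean angle $\theta_i^\tau$ at vertex $v_i$ in a triangle $\tau = v_iv_jv_k \in \T$ as
$$
\cos \theta_i^\tau \;=\; \frac{a_j^\tau + a_k^\tau - a_i^\tau}{2\sqrt{a_j^\tau\,a_k^\tau}}, \qquad a_\ell^\tau \,:=\, c_\ell^\tau\, e^{u_\ell^{(m)}},
$$
where $c_\ell^\tau > 0$ is a constant depending only on $p$ and $\tau$. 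Both parts of the lemma will follow from asymptotic analysis of this single identity along the subsequence $u^{(m)}$.

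For part (a), I will argue by contradiction: suppose a triangle $\tau = v_1v_2v_3 \in \T$ has $v_1, v_2$ white and $v_3$ black. Since $u_3^{(m)} \to -\infty$ we have $a_3^\tau \to 0$, while $a_1^\tau$ and $a_2^\tau$ stay bounded below away from $0$. Applying the cosine formula at $v_3$ yields
$$
\cos \theta_3^\tau \;\longrightarrow\; \frac{a_1^\tau + a_2^\tau}{2\sqrt{a_1^\tau\,a_2^\tau}} \;\geq\; 1
$$
by AM--GM, with equality iff $a_1^\tau/a_2^\tau \to 1$. Since $\cos \theta_3^\tau \leq 1$ must hold at every $m$ for the Euclidean triangle to be valid, equality is forced in the limit, pinning the ratio $a_1^\tau/a_2^\tau \to 1$ --- a codimension-one relation among the $u^{(m)}$'s --- and yielding $\theta_3^\tau \to 0$. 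I will then propagate this single-triangle constraint across the full star of $v_3$ in $\T$ (each triangle at $v_3$ imposes its own AM--GM relation on its pair of white vertices), and combine with the \emph{strict} Delaunay inequalities $\cos\alpha+\cos\alpha' > 0$ that must hold at each edge incident to $v_3$ throughout the open cell $\Psi_\T^{-1}(D(\T))$. The accumulated relations should form an over-determined system with no consistent solution for large $m$, which gives the contradiction.

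For part (b), let $\tau = v_1v_2v_3 \in \T$ with $v_1$ white and $v_2, v_3$ black. Now $a_2^\tau, a_3^\tau \to 0$ while $a_1^\tau$ remains bounded below. The cosine formula at $v_1$ combined with the triangle-validity condition $\cos \theta_1^\tau \geq -1$ rearranges to
$$
a_1^\tau \;\leq\; \bigl(\sqrt{a_2^\tau} + \sqrt{a_3^\tau}\bigr)^2,
$$
whose right-hand side vanishes in the limit. In this asymptotically tight regime --- the only regime in which $\tau$ can persist as a valid Euclidean triangle along the entire subsequence --- a direct asymptotic expansion of the cosine formula in the ratios $a_2^\tau/a_1^\tau$ and $a_3^\tau/a_1^\tau$ then shows $\cos \theta_1^\tau \to 1$, i.e., $\theta_1^\tau \to 0$.

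The principal obstacle is the constraint-propagation step of part (a): turning the pointwise asymptotic relation $a_1^\tau \sim a_2^\tau$ at a single triangle into a genuine \emph{global} contradiction across the star of $v_3$ in $\T$ requires careful combinatorial bookkeeping of the constants $c_\ell^\tau$ across adjacent triangles, together with a clean use of the strict Delaunay inequalities at every edge incident to $v_3$. Part (b), in contrast, reduces to a direct asymptotic estimate from the cosine formula and the validity condition.
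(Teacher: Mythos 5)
Your cosine identity and the reduction of everything to the quantities $a_\ell^\tau = c_\ell^\tau e^{u_\ell}$ are correctly derived, but neither part of the lemma is actually proved, and the obstruction in both cases is the same: the sign of the exponent in your length formula is opposite to the one the argument needs. You write $x_e = C_e e^{-(u_j+u_k)/2}$, which is indeed the correct dependence on $u=\ln w$ (the $\lambda$-lengths scale like $w_i^{-1/2}$), but the paper's proof works with lengths of the form $a_i e^{u_j+u_k}$, i.e.\ it treats the exponent variable as the discrete conformal factor $\ln\lambda_i$, which is a \emph{negative} multiple of $\ln w_i$. Under that convention the similarity-normalized length of the edge opposite $v_i$ is $a_ie^{-u_i}$, which tends to $\infty$ exactly at black vertices; then (a) is a one-line triangle-inequality violation (the edge joining the two white vertices eventually exceeds the sum of the other two), and (b) is immediate because the angle at the white vertex faces a bounded edge while its two adjacent edges blow up. Under your convention the asymptotics are exactly reversed: after normalization the edge opposite a black vertex \emph{shrinks}, so the configuration of (a) merely degenerates to a thin sliver, while it is the configuration of (b) that violates the triangle inequality.

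This is why your part (a) cannot be completed as proposed: the relation $(\sqrt{a_1}-\sqrt{a_2})^2\le a_3\to0$ you extract is consistent, not contradictory. For example, on the thrice-punctured sphere with its symmetric two-triangle triangulation, take decorations with $w_1=w_2\to\infty$ and $w_3\to0$, normalized so that $\sum_i u_i=0$: by symmetry $x_{13}=x_{23}$, each triangle is a valid isosceles sliver with all angles below $\pi/2$, hence Delaunay, and each triangle has exactly two white vertices in the coloring $t_i=-\infty\Leftrightarrow$ black. So no amount of constraint propagation around the star of $v_3$ will yield a contradiction; the coloring must be matched to the length formula before the statement becomes provable. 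Your part (b) is internally inconsistent: from $\cos\theta_1\ge-1$ you correctly get $a_1\le(\sqrt{a_2}+\sqrt{a_3})^2\to0$, which contradicts your standing assumption that $a_1$ stays bounded below; what you have actually shown is that the hypothesis of (b) is vacuous in your convention, and the claimed expansion giving $\cos\theta_1\to1$ is false (with $a_1$ bounded below and $a_2,a_3\to0$ your formula gives $\cos\theta_1\to-\infty$). The repair is to fix one consistent convention: either use the paper's length formula and read ``black'' as ``the exponent variable tends to $-\infty$'' (equivalently $\lambda_i\to0$, i.e.\ $w_i\to+\infty$), or keep your formula and interchange the two colors. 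With a consistent choice both parts follow in two lines from the triangle inequality and the law of cosines, exactly as your algebra already suggests, and the global argument of \S5.1 is unaffected since both colors always occur.
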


\begin{proof}
To see (a), suppose otherwise, using the $\Phi_{\T}$ length
coordinate, we see the given assumption is equivalent to
following. There exists a Euclidean triangle of lengths $a_i
e^{u^{(m)}_j +u^{(m)}_k}$, $\{i,j,k\}=\{1,2,3\}$, where  $\lim_{m}
u^{(m)}_i >- \infty$ for $i=2,3$ and $\lim_{m} u^{(m)}_1
=-\infty$. By the triangle inequality, we have $$a_2 e^{u^{(m)}_1
+u^{(m)}_3} + a_3 e^{u^{(m)}_1 +u^{(m)}_2} > a_1 e^{u^{(m)}_2
+u^{(m)}_3}$$ This is the same as
$$a_2 e^{-u^{(m)}_2} + a_3 e^{-u^{(m)}_3} > a_1 e^{-u^{(m)}_1}.$$
However, by the assumption, the right-hand-side tends to $\infty$
and the left-hand-side is bounded. The contradiction shows that
(a) holds.

To see (b), we use the same notation as in the proof of (a).
 Let the length $l^{(m)}_i$ of the edge
 $v_j v_k$ in metric $d_m$  be $a_i e^{u^{(m)}_j
+u^{(m)}_k}$, $\{i,j,k\}=\{1,2,3\}$. Let $\alpha_i: =\alpha_i(m)$
be the inner angle at $v_i$. Note that the triangle is similar to
the triangle of lengths $a_ie^{-u^{(m)}_i}$ and $\lim_{m}
a_ie^{-u^{(m)}_i}$ is $\infty$ when  $i=2,3$ and is finite for
$i=1$. Therefore, the angle $\alpha_1$ tends to 0. $\square$
\end{proof}

We now finish the proof of $F(P)=Q$ as follows. Since the surface
$S$ is connected, there exists an edge $e$ whose end points $v,
v_1$ have different colors. Assume $v$ is white and $v_1$ is
black. Let $v_1, ..., v_k$ be the set of all  vertices adjacent to
$v$ so that $v, v_i, v_{i+1}$ form vertices of a triangle and let
$v_{k+1}=v_1$. Now apply above lemma to triangle $\Delta vv_1v_2$
with $v$ white and $v_1$ black, we conclude that $v_2$ must be
black. Repeating this to $\Delta vv_2v_3$ with $v$  white and
$v_2$ black, we conclude $v_3$ is black. Inductively, we conclude
that all $v_i$'s, for $i=1,2,..., k$, are black. By part (b) of
the above lemma, we conclude that the curvature of $d_m$ at $v$
tends to $2\pi$. This shows that $F(u^{(m)})$ tends to infinity of
$Q$. Therefore $F(P)=Q$.

\subsection{Injectivity of $F$ }

The proof uses a variational principle developed in \cite{luo}.
Recall that the map $\mathbf F: \R^n \to \R^n$ is the discrete
curvature map $K_{\A^{-1}(p, w(u))}$ given by (\ref{curv}). Since
$\A$ is a $C^1$ diffeomorphism and the discrete curvature $K:
\T_{pl}(S,V) \to \R^V$ is real analytic, hence the map $\mathbf F$
is $C^1$ smooth. Let $\T_i$, $i=1,..., k$, be the set of all
triangulations so that $(\{p\} \times \R^n) \cap D(\T_i) \neq
\emptyset$ and $\{p\} \times \R^n \subset \cup_{i=1}^k D(\T_i)$.

\begin{lemma}  Let $\phi: \R^n \to \{p\}\times \R^n$ be $\phi(x)=(p, x)$ and
 $U_i =\phi^{-1}((\{p\} \times \R^n) \cap D(\T_i)) \subset \R^n$ and $J =\{ i | $
\text{$int(U_i) \neq \emptyset$}\}. Then $ \R^n =\cup_{i \in J}
U_i$ and $U_i$ is real analytic diffeomorphic to a convex polytope
in $ \R^n$.
\end{lemma}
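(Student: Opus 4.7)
The plan is to combine Akiyoshi's finiteness (Theorem 3.2), Penner's cell decomposition, and a direct calculation showing that the Delaunay inequalities linearize after the substitution $X_k := e^{u_k}$.

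The covering $\R^n=\cup_{i\in J}U_i$ should follow from Penner's theorem $T_D(\Sigma)=\cup_\T D(\T)$: every $u\in\R^n$ satisfies $\phi(u)\in D(\T_i)$ for some $i$, so $u\in U_i$, and Akiyoshi's theorem makes the indexing set finite. Since the cell decomposition of $T_D(\Sigma)$ pulls back along the inclusion $\phi$ to a cell decomposition of $\R^n$, every stratum is contained in the closure of a top-dimensional cell, which reduces the union to those $U_i$ with nonempty interior.

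For the polytope structure I would work in $\lambda$-length coordinates $\Psi_{\T_i}$. Fix the reference decoration on $p$ with all horocycle lengths equal to $1$, and let $L_e^{(0)}$ denote the corresponding $\lambda$-lengths. By Lemma 3.1, when the decoration is $w_j=e^{u_j}$, the $\lambda$-length of the edge $e$ with endpoints $v_i,v_j$ is
\begin{equation*}
L_e(u)=L_e^{(0)}\,e^{-(u_i+u_j)/2}.
\end{equation*}
Substituting this into Penner's cosine law $a_v=L_{\mathrm{opp}}/(L_{\mathrm{adj}_1}L_{\mathrm{adj}_2})$ from formula (1), an immediate algebraic simplification gives
\begin{equation*}
a_{t,v}(u)=C_{t,v}\,e^{u_v}\qquad(C_{t,v}>0),
\end{equation*}
so each individual angle is a positive constant times $e^{u_v}$, with $C_{t,v}$ depending only on $p$ and $t$.

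The Delaunay inequality $a_e+a'_e\leq b_e+b'_e+c_e+c'_e$ from (2) at an edge $e=v_1v_2$ shared by triangles $t=\triangle v_1v_2v_3$ and $t'=\triangle v_1v_2v_4$ then becomes
\begin{equation*}
C_{t,v_3}\,e^{u_3}+C_{t',v_4}\,e^{u_4}\leq (C_{t,v_1}+C_{t',v_1})\,e^{u_1}+(C_{t,v_2}+C_{t',v_2})\,e^{u_2},
\end{equation*}
a linear inequality in the coordinates $X_j:=e^{u_j}$. Let $\psi:\R^n\to\R_{>0}^n$ be the real analytic diffeomorphism $\psi(u)=(e^{u_1},\ldots,e^{u_n})$. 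Then $\psi(U_i)$ is the intersection of $\R_{>0}^n$ with finitely many closed linear half-spaces of $\R^n$, hence a convex polytope in the same liberal sense already used in the paper for $D_{pl}(\T)$, which exhibits $U_i$ as real analytically diffeomorphic to a convex polytope in $\R^n$.

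The main obstacle will be the explicit linearization: one must check that Penner's cosine law combined with the $\lambda$-length scaling rule under horocycle change produces the clean formula $a_{t,v}=C_{t,v}\,e^{u_v}$. Once that is in hand, both the linearity of the Delaunay condition in $X=e^u$ and the convexity of $\psi(U_i)$ are immediate, and what remains is bookkeeping of indices over the finite set $J$.
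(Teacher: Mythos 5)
Your proposal is correct and follows essentially the same route as the paper: the substitution $X_j=e^{u_j}$ is exactly the paper's variable $\delta_j=\lambda_j^{-2}$ (since $w_j=\lambda_j^{-2}=e^{u_j}$), and your linearization of the Delaunay condition via Penner's cosine law reproduces the paper's inequality $c_3\delta_3+c_4\delta_4\le c_1\delta_1+c_2\delta_2$, with the same real analytic diffeomorphism onto a convex polytope. The only cosmetic difference is in the covering step, where the paper argues that the finitely many closed sets $U_i$ with empty interior are semi-algebraic of dimension less than $n$ and hence cannot contain the open complement of $\cup_{i\in J}U_i$, which is a cleaner justification than your appeal to the pulled-back cell decomposition but amounts to the same point.
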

\begin{proof} By definition, both $\{p\} \times \R^n$ and $D(\T_i)$
are closed and semi algebraic in $T_D(\Sigma)$. Therefore $U_i$ is
closed and semi-algebraic. Now by definition, $X: =\cup_{i \in J}
U_i$ is a closed subset of $ \R^n$ since $U_i$ is closed. If $X
\neq \R^n$, then the complement $ \R^n -X$ is a non-empty open set
which is  a finite union of real algebraic sets of dimension less
than $n$. This is impossible.

Finally, we will show that for any triangulation $\T$ of $(S,V)$
and $p \in T(\Sigma)$, the intersection $U =\phi^{-1}((\{p\}
\times R^n) \cap D(\T))$ is real analytically diffeomorphic to a
convex polytope in a Euclidean space. In fact $\Psi_{\T}^{-1}(U)
\subset \R^{E(\T)}$ is  real analytically diffeomorphic to a
convex polytope. To this end, let $b =\Psi_{\T}^{-1}(p,
(1,1,....,1))$. By definition, $\Psi_{\T}^{-1}(U)$ is give by $$\{
x \in \R^{E(\T)}_{>0}| \exists \lambda \in \R_{>0}^V, x(e) =b(e)
\lambda(v_1) \lambda(v_2),
\partial e=\{v_1, v_2\}, \text{Delaunay
condition (\ref{4}) holds for $x$}\}.$$ We claim that the Delaunay
condition (\ref{4}) consists of linear inequalities in the
variable $\delta: V\to \R_{>0}$ where $\delta(v) =
\lambda(v)^{-2}$. Indeed, suppose the two triangles adjacent to
the edge $e=(v_1, v_2)$ have vertices $v_1, v_2, v_3$ and $v_1,
v_2, v_4$ as shown in figure~\ref{figure_2}. Let $x_{ij}$ (respectively
$b_{ij}$) be the value of $x$ (respectively $b$) at the edge
joining $v_i, v_j$, and $\lambda_i=\lambda(v_i)$. By definition,
$x_{ij}=b_{ij} \lambda_i \lambda_j$. The Delaunay condition
(\ref{4}) at the edge $e=(v_1v_2)$ says that
\begin{equation} \frac{x_{12}^2}{x_{31}x_{32}} +
\frac{x_{12}^2}{x_{41}x_{42}} \leq \frac{x_{31}}{x_{32}} +
\frac{x_{32}}{x_{31}} +\frac{x_{41}}{x_{42}}
+\frac{x_{42}}{x_{41}} \end{equation} It is the same as, using
$x_{ij}=b_{ij}\lambda_i \lambda_j$,

$$ c_3 \frac{\lambda_1 \lambda_2}{\lambda_3^2} + c_4 \frac{\lambda_1\lambda_2}{\lambda_4^2} \leq
c_1\frac{\lambda_2}{\lambda_1} + c_2\frac{\lambda_1}{\lambda_2},$$
where $c_i$ is some constant depending only on $b_{jk}$'s.
Dividing above inequality by $\lambda_1 \lambda_2$ and using
$\delta_i=\lambda_i^{-2}$, we obtain
\begin{equation}\label{1233} c_3 \delta_3 + c_4 \delta_4 \leq c_1 \delta_1 + c_2
\delta_2 \end{equation} at each edge $e\in E(\T)$. This shows for
$b$ fixed, the set of all possible values of $\delta$ form a
convex polytope $Q$ defined by (\ref{1233}) at all edges and
$\delta(v)>0$ at all $v \in V$. On the other hand, by definition,
the map from  $Q$ to $\Psi_{\T}^{-1}(U)$ sending $\delta$ to
$x=x(\delta)$ given by $x(vv') =\frac{b(vv')}{\sqrt{ \delta(v)
\delta(v')}}$ is a real analytic diffeomorphism. Thus the result
follows. $\square$
\end{proof}

Write $\mathbf F=(F_1, ..., F_n)$ which is $C^1$ smooth. By
theorems 1.2 and 2.1 of \cite{luo}, one sees  that (a)
$F_j|_{U_h}$ is real analytic so that $\frac{\partial
F_i}{\partial u_j} =\frac{\partial F_j}{\partial u_i}$ in $U_h$
for all $h \in J$ and (b) the Hessian matrix $[\frac{\partial
F_i}{\partial u_j}]$ is positive semi-definition on each $U_h$ so
that its kernel consists of vectors $\lambda (1,1,...,1)$.
Therefore, the 1-form $\eta =\sum_{i} F_i(u) du_i$ is a $C^1$
smooth 1-form on $\R^n$ so that $d\eta =0$ on each $U_h, h\in J$.
This implies that $d \eta =0$ in $\R^n$. Hence the integral $W(u)
=\int_{0}^u \eta$ is a well defined $C^2$ smooth function on
$\R^n$ so that its Hessian matrix is positive semi-definition.
Therefore, $W$ is convex in $\R^n$ so that its gradient
$\bigtriangledown W=\mathbf F$. Furthermore, since the kernel of
the Hessian of $W$ consists of diagonal vectors $\lambda(1,1,...,
1)$ at each point in $U_h, h\in J$ and $\R^n =\cup_{h \in J} U_h$,
the Hessian of the function $W|_{P}$ is positive definite. Hence
$W|_P$ is strictly convex. Now we use the following well known
lemma,

\begin{lemma} If $W: \Omega \to \R$ is a  $C^1$-smooth strictly convex
function on an open convex set $\Omega \subset \R^m$, then its
gradient $\bigtriangledown W: \Omega \to \R^m$ is an embedding.

\end{lemma}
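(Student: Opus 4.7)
The plan is to prove injectivity first, and then upgrade injectivity to an embedding via Brouwer's invariance of domain. Since $W$ is $C^1$, the gradient $\nabla W$ is continuous, so once injectivity is in hand the remaining issue is only that the inverse map (on the image) be continuous.

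For injectivity, I would use the first-order characterization of $C^1$ strict convexity: for any distinct $u, v \in \Omega$,
\begin{equation*}
W(v) > W(u) + \nabla W(u)\cdot (v-u), \qquad W(u) > W(v) + \nabla W(v)\cdot (u-v).
\end{equation*}
Adding these two inequalities and cancelling gives
\begin{equation*}
\bigl(\nabla W(u) - \nabla W(v)\bigr)\cdot (u-v) > 0,
\end{equation*}
which in particular forces $\nabla W(u) \neq \nabla W(v)$. So $\nabla W$ is injective. (For this step I am using only that $W$ is $C^1$ and strictly convex on the open convex set $\Omega$; the strict form of the subgradient inequality is standard for $C^1$ strictly convex functions.)

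For the embedding property, I would invoke Brouwer's invariance of domain: $\nabla W\colon \Omega \to \R^m$ is a continuous injection from an open subset of $\R^m$ into $\R^m$, hence it maps $\Omega$ homeomorphically onto an open subset of $\R^m$. In particular its inverse is continuous, so $\nabla W$ is a topological embedding.

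The main (and really only) obstacle is setting up the strict subgradient inequality cleanly from the hypothesis of $C^1$ strict convexity; once that is done, the injectivity calculation is a one-line pairing, and invariance of domain closes the argument with no further work. There is no need to assume $W$ is $C^2$ or to invoke any inverse function theorem.
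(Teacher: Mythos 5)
Your proof is correct and complete. The paper itself gives no argument here --- it simply cites the statement as a ``well known lemma'' --- so there is nothing to compare against; your write-up supplies exactly the standard proof one would want. The two ingredients are both sound: the strict first-order inequality $W(v) > W(u) + \nabla W(u)\cdot(v-u)$ does hold for $C^1$ strictly convex functions (it follows from the non-strict gradient inequality applied at an intermediate point $u+t_0(v-u)$ together with strict convexity of $W$ along the segment), and summing the two inequalities yields the strict monotonicity $(\nabla W(u)-\nabla W(v))\cdot(u-v)>0$, hence injectivity. Your appeal to invariance of domain is also the right way to finish: injectivity alone does not give continuity of the inverse when a sequence $u_k$ with $\nabla W(u_k)$ convergent could escape every compact subset of $\Omega$, and invariance of domain disposes of this issue cleanly. (It is worth noting that for the application in the paper only injectivity of $\nabla W$ is actually used, so the monotonicity computation alone would already suffice there.)
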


Apply the lemma to $W|_P$ and use $\bigtriangledown (W|_P)=F$, we
conclude that $F: P \to Q$ is injective.

The discrete Yamabe flow with surgery is the gradient flow of the
strictly convex function $W(u) -\sum_{i=1}^nK^*_i u_i$ which has a
unique minimal point in $P$. In the formal notation, the flow
takes the form $\frac{du_i(t)}{dt} = K_i-K^*_i$ and $u(0)=0$. The
exponential convergence of the flow was established in theorem 1.4
of \cite{luo}.

\section{A Conjecture}

We conjecture that the number of surgery operations used in the
discrete Yamabe flow to find the target PL metric is finite, i.e.,
along the integral curve of the gradient flow of the function
$W(u)-\sum_{i=1}^n K^*_i u_i$, only finitely many diagonal
switches occur. This is supported by our numerical experiments.




There should be a related theory of discrete conformal maps
associated to the discrete Riemann surfaces introduced in this
paper. See \cite{st} for the corresponding discrete conformal maps
for circle packing.

 \noindent David Gu, Department of Computer Science,
Stony Brook University, New York 11794, USA. \newline Email:
gu$@$cs.stonybrook.edu.

\noindent Feng Luo, Department of Mathematics, Rutgers University,
New Brunswick, NJ 08854, USA. \newline Email:
fluo$@$math.rutgers.edu

\noindent Jian Sun, Mathematical Sciences Center, Tsinghua
University, Beijing 100084, China. \newline Email:
jsun$@$math.tsinghua.edu.cn.

\noindent Tianqi Wu, Mathematical Sciences Center, Tsinghua
University, Beijing 100084, China. \newline Email:
mike890505$@$gmail.com.

\bigskip

\bigskip

\noindent {\bf Appendix: A proof of Akiyoshi's theorem}

\medskip
For completeness, we present our proof in this appendix. The
theorem and the proof hold for decorated finite volume hyperbolic
manifolds of any dimension. We state the 2-dimensional case for
simplicity.

\begin{theorem}(Akiyoshi \cite{ak})\label{dau} For a finite area complete
hyperbolic metric $d$ on $\Sigma$, there exist triangulations
$\T_1$, ..., $\T_k$ so that
 for any $w \in R_{>0}^n$, any Delaunay
triangulation of $(d, w)$ is isotopic
$\T_i$, $i\in\{1,2,...,k\}$. 
\end{theorem}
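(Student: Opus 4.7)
The plan is to apply the Epstein--Penner convex hull construction in Minkowski space $\R^{2,1}$ to convert the statement into a finiteness claim about the supporting hyperplanes of a $\Gamma$-invariant convex body.  Realize $\H^2 \subset \R^{2,1}$ as the upper sheet of the two-sheeted hyperboloid, let $\Gamma < O^+(2,1)$ be the Fuchsian group with $\Sigma = \H^2/\Gamma$, and represent each cusp $v_i$ by a $\Gamma$-orbit $\Gamma \ell_i$ of null rays in the forward light cone $L^+$.  The decoration $w_i$ selects a distinguished point $P_i(w_i) \in \R_{>0}\ell_i$ on each null ray, via the standard correspondence between horoballs and light-cone points (the horoball $\{x\in\H^2 : \langle x,P\rangle \ge -1\}$ is dual to $P$).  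By the Epstein--Penner theorem the Delaunay decomposition of $(d,w)$ is the $\Gamma$-quotient of the combinatorial face structure of the convex hull $C_w = \mathrm{conv}\bigl(\Gamma\cdot\{P_1(w_1),\ldots,P_n(w_n)\}\bigr)$.  The theorem therefore reduces to showing that only finitely many combinatorial types of $\partial C_w$ modulo $\Gamma$ occur as $w$ ranges over $\R_{>0}^n$.

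I would then parametrize the faces of $C_w$ by their support data.  A top-dimensional face $F$ of $\partial C_w$ is cut out by a supporting hyperplane of the form $\{x : \langle x, q_F\rangle = -1\}$ for a unique timelike vector $q_F$; writing $q_F = s_F\hat q_F$ with $\hat q_F \in \H^2$ and $s_F > 0$, the point $\hat q_F$ projects to the Voronoi vertex on $\Sigma$ of the Delaunay cell dual to $F$, and the condition that $F$ be a face is equivalent to saying that $\hat q_F$ is equidistant (in the horocyclic sense determined by $w$) from the horoballs at the vertices of $F$ and weakly farther from every other horoball in the $\Gamma$-orbit.  A global rescaling $w \mapsto \lambda w$ merely dilates $C_w$ without altering its combinatorial type, as already exploited in the scaling identity following Lemma \ref{l1}, so I normalize to the slice $\prod_i w_i = 1$ and work in this $(n-1)$-dimensional family.

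The main step, and the main obstacle, is to prove a $w$-uniform geometric bound: there should exist a compact set $K\subset\Sigma$ depending only on $d$ such that, for every normalized $w$ and every face $F$ of $C_w$, the projected Voronoi vertex $\hat q_F$ lies in $K$.  Given such a bound, the $\Gamma$-orbits of ideal polygons with vertices in the cusp orbits and Voronoi center in the lift of $K$ are finite in number; combined with Euler's formula, which caps the total number of cells of any Delaunay triangulation in terms of the topology of $\Sigma$, this produces only finitely many isotopy classes $\T_1,\ldots,\T_k$.  The difficulty lies in the boundary behavior of the slice $\prod_i w_i = 1$, where some $w_i$ tend to $0$ or $\infty$; a priori the Voronoi vertices could migrate deep into a cusp neighborhood.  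The control I would seek comes from the finite area of $\Sigma$ together with the Margulis thick--thin decomposition: each $\Gamma$-translate of a $w$-horoball supplies a constraint $\langle P, q_F\rangle \le -1$ on $\hat q_F$, and as $w$ degenerates within the slice the horoballs whose sizes $w_i$ shrink are compensated by horoballs whose sizes grow, so that $\hat q_F$ cannot escape arbitrarily far into any cusp without violating the supporting-hyperplane inequality at a $\Gamma$-translate of the growing cusp's horoball, keeping $\hat q_F$ confined to a pre-compact region of $\Sigma$.
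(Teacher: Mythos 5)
Your overall strategy --- translating the statement via the Epstein--Penner convex hull in $\R^{2,1}$ into a finiteness claim about the face structure of $C_w$ --- is a legitimate and genuinely different route from the paper's; it is in fact the framework of Akiyoshi's original proof, whereas the paper argues intrinsically on the surface with Voronoi cells, orthogeodesics and shortest geodesics. The reduction steps you describe (scaling invariance of the combinatorics, identification of $\hat q_F$ with the Voronoi vertex dual to $F$) are fine.

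However, the step you yourself flag as the main one is false as stated: there is no compact set $K\subset\Sigma$, depending only on $d$, containing all projected Voronoi vertices $\hat q_F$ for all normalized $w$. Every cusp $v_j$ is always a vertex of the Delaunay decomposition (each $P_j$ is an extreme point of $C_w$), so its Voronoi cell $R_j(w)$ is always nonempty and carries Voronoi vertices on its boundary. For a fixed point $x$ one has $d(x,\partial H_j(w_j))=d(x,\partial H_j(1))+\ln(1/w_j)\to\infty$ as $w_j\to 0$ (Lemma \ref{l1}), while the distance to any horoball whose size is bounded below stays bounded; hence the equidistant locus between $H_j(w_j)$ and any other horoball, and with it all of $\overline{R_j(w)}$ and its Voronoi vertices, recedes arbitrarily deep into the cusp at $v_j$. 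Your normalization $\prod_i w_i=1$ does not prevent this: making some other $w_i$ large only makes $d(x,\partial H_i(w_i))$ smaller at fixed $x$, which pushes the equidistant locus \emph{further} toward the tiny horoball $H_j$, not back into the thick part. So the ``compensation'' heuristic in your last paragraph runs in the wrong direction, and the finiteness cannot be obtained from a uniform compactness bound on circumcenters. Both Akiyoshi's argument and the paper's must confront exactly this degeneration rather than exclude it: the paper normalizes by $\max_i w_i=1/2$, splits the cusps into those with $w_i$ bounded below (edges between them have uniformly bounded orthogeodesic length, hence are finite in number) and those with $w_j\to 0$, and for the latter shows that all edges of $\partial_0 R_j(w)$ become equidistant from $H_j$ up to $o(1)$ because the relevant equidistant loci are simple geodesic loops high in the cusp with length tending to $0$; consequently every Delaunay edge at such a cusp is a shortest geodesic from $v_j$, and there are only finitely many of those. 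To rescue your approach you would need an analogous analysis of the faces of $C_w$ incident to the degenerating light-cone points in the limit $w_j\to 0$, not a compactness statement.
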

\begin{proof}   We begin by study the shortest geodesics in a
complete finite area hyperbolic surface $(\Sigma, d)$. Recall the
Shimizu lemma \cite{beardon} which  implies that if $w \in (0,
1)^n$, then the associated horoballs $H_i(w)$ in the decorated
metric $(d,w)$ are embedded and pairwise disjoint. Let us assume
without loss of generality that $w \in (0,1)^n$. A geodesic
$\alpha$ from cusp $v_i$ to $v_j$ in $(\Sigma, d)$ is called a \it
shortest geodesic \rm from $v_i$ to $v_j$ if there exists a $w \in
(0, 1)^n$ so that $\alpha \cap X_w$ is a shortest path among all
homotopically non-trivial paths in $X_w$ joining $\partial H_i(w)$
to $\partial H_j(w)$. The shortest property implies that $\alpha
\cap X_w$ is an orthogeodesic. Furthermore, by lemma \ref{l1}, if
$\alpha$ is a shortest geodesic, then for any $w' \in (0, 1)^n$,
$\alpha \cap X_{w'}$ is again a shortest geodesic in $X_{w'}$ from
$\partial H_i(w')$ to $\partial H_j(w')$, i.e., being a shortest
geodesic from $v_i$ to $v_j$ is independent of the choice of
decorations. Indeed, for any geodesic $\beta$ from cusp $v_i$ to
$v_j$, we have
\begin{equation}\label{124} l(\beta \cap X_{w'}) =l(\beta \cap X_w)
-\ln(w_i')-\ln(w_j')+\ln(w_i)+\ln(w_j) \end{equation}

\begin{lemma}\label{shortgeo} Suppose $(\Sigma, d)$ is a finite area complete
hyperbolic surface. Then

(a) there are only finitely many shortest geodesics from $v_i$ to
$v_j$.

(b) there is $\delta_{ij} =\delta_{ij}(\Sigma, d) >0$ so that if
$\alpha$ is a shortest geodesic from $v_i$ to $v_j$ and $\beta$ is
another geodesic from $v_i$ to $v_j$ with $|l( \beta \cap X_w)
-l(\alpha \cap X_w)| \leq \delta_{ij}$, then $\beta$ is a shortest
geodesic.

(c) given $v_i$, if $\alpha$ is a shortest orthogeodesic geodesics
among all orthogeodesics in $X_w$ from $\partial H_i$ to $\partial
X_w$, then $\alpha^*$, the complete geodesic containing $\alpha$,
is an edge of the decorated metric $(d,w)$ and the mid-point of
$\alpha$ is in $R_j(w)$.

 \end{lemma}
\begin{proof} The first part follows from the simple fact that on any
compact surface $X_w$, for any constant $C$, there are only
finitely many orthogeodesics of length at most $C$.   Part (b)
follows from (a) and equality (\ref{124}). Part (c) follows from
the definition of Voronoi cells and its dual. Note that in
general, if $\beta$ is a shortest orthogeodesic in $X_w$ between
$\partial H_i(w)$ and $\partial H_j(w)$, $\beta^*$ may not be an
edge in any Delaunay triangulation of $(d,w)$. $\square$
\end{proof}

Now we prove the theorem by contradiction. Suppose otherwise,
there exists a sequence of decorated metrics $(d, w^{(m)})$ where
$w^{(m)} =(w^{(m)}_1, ..., w^{(m)}_n) \in \R^n$ so that the
associated Delaunay triangulations $\T_m =\T(d, w^{(m)})$ are
pairwise distinct in $(\Sigma, d)$.  After normalizing $w^{(m)}$
by scaling, relabel the vertices $v_1, ..., v_n$ and taking
subsequences, we may assume

(i) $w^{(m)}_1 =\max\{ w^{(m)}_i | i=1,2,...,n\} =1/2$;

(ii) for each $i=1,2,..., n$, the limit $\lim_{m} w^{(m)}_i =t_i
\in[0, 1/2]$ exists;

(iii) $t_1,..., t_k >0$ and $t_{k+1}=...=t_n=0$.

For simplicity, we use $E_{ij}(\T)$ to denote the subset of all
edges of $\T$ joining $v_i$ to $v_j$. We will derive a
contradiction by showing that $\cup_{m} E_{ij}(\T_m)$ is a finite
set.

\begin{lemma}  There exists a constant $C>0$ so that for all $i,
j \leq k$, and all $e \in E_{ij}(\T_m)$, the length
$$ l( e \cap X_{w^{(m)}}) \leq C.$$
In particular, $\cup_{m} E_{ij}(\T_m)$ is a finite set.
\end{lemma}

\begin{proof} For any $\delta \in (0, 1/2)$, let
 $u^{(m)}(\delta) = (w^{(m)}_1, ..., w^{(m)}_k, \delta, ..., \delta) \in \R^n$.
  Fix a $\delta$, since $\lim_{m} w^{(m)}_{j} =0$ for $j>k$, for $m$ large,
 each point $x \in
X_{u^{(m)}(\delta)}$ is in some Voronoi cell $R_i(w^{(m)})$ for
some $i \leq k$. Therefore, there is a small $\delta >0$ so that
for all $i, j =1,2,..., k$, all large $m$, and all $e \in
E_{ij}(\T_m)$, $ e \cap X_{w^{(m)}} \subset X_{u^{(m)}(\delta)}$.
 By the assumption that $t_1,
..., t_k
>0$ and by choosing $\delta$ smaller than $\min\{t_1, ..., t_k\}$, we
see that the surface $X_{u^{(m)}(\delta)}$ is a subset of the
compact surface $X_{(\delta, ..., \delta)}$. Therefore, there is a
constant $C>0$ so that $diam(X_{u^{(m)}(\delta)}) \leq C/2$ for
all $m$.   Note  if $e \in E(\T(d,w))$ is an edge, then the length
of the orthogeodesic $e \cap X_w$ in metric $d$ satisfies,
\begin{equation}\label{orthlength}
 l(e \cap X_w) \leq 2 diam(X_w),
\end{equation} where $diam(Y)$ is the diameter of a metric space
$Y$. Indeed,  $l(e \cap X_w) \leq diam(R_i(w)) + diam(R_j(w)) \leq
2 diam(X_w)$.
 This shows, by (\ref{orthlength}),  that
$$ l( e \cap X_{w^{(m)}}) \leq  l(e \cap X_{u^{(m)}(\delta)}) \leq 2 diam(X_{u^{(m)}(\delta)}) \leq
C.$$

Finally, since for any constant $C$, there are only finitely many
orthogeodesics in $X_{(\delta, ..., \delta)}$ of lengths at most
$C$, it follows that  $\cup_{m} E_{ij}(\T_m)$ is finite.
 $\square$
\end{proof}

Now   for $m$ large,  each point in $X_{u^{(m)}(1/2)}$ is in
$\cup_{i=1}^k R_i(w^{(m)})$. Therefore for large $m$, if $i,j
>k$, then $E_{ij}(\T_m) =\emptyset$ since an edge $e \in
E_{ij}(\T_m)$ must intersect $X_{u^{(m)}(1/2)}$. Hence if
$E_{jh}(\T_m) \neq \emptyset$, then $h \leq k$.

\begin{lemma}  There is  $n_0$ so that if $m \geq n_0$, $j >k$ and
$e \in E_{ij}(\T_m)$, then $e$ is a shortest geodesic from $v_i$
to $v_j$.   In particular for $j>k$ and $i \leq k$, the set
$\cup_{m}E_{ij}(\T_m)$ is finite.
\end{lemma}

\begin{proof} We need to study the Voronoi cell $R_j(w^{(m)})$. Since
$\lim_{m} w^{(m)}_j =0$ and $t_i >0$, for large $m$,  the Voronoi
cell $R_j(w^{(m)}) \subset H_j(u^{(m)}(1/2))$. Let $\partial_0
R_j(w^{(m)})$ be the piecewise geodesic boundary component
$\partial R_j(w^{(m)}) -\partial H_j(w^{(m)})$.

\noindent {\bf Claim} 
for any two edges $a_m, b_m$ in $\partial_0 R_j(w^{(m)})$,
\begin{equation}\label{3456} \lim_{m}|dist(a_m, H_j(w^{(m)})) -dist(b_m, H_j(w^{(m)}))|= 0. \end{equation}

Assuming the claim, we finish the proof of the lemma as follows.
 Let $\epsilon_m$ be a shortest orthogeodesic in
$X_w$ from $\partial H_j(w^{(m)})$ to $\partial X_w$ and
$e_m'=\epsilon_m^*$ be the complete geodesic containing
$\epsilon_m$. Then by lemma \ref{shortgeo}, $e_m' \in \cup_{i=1}^n
E_{ij}(\T_m)$. Let the dual of $e_m'$ be the edge $a_m$ of
$\partial_0
R_j(w^{(m)})$. 
 For any
edge $e_m \in E_{ij}(w^{(m)})$ dual to an edge $b_m$ of
$\partial_0 R_j(w^{(m)})$, we have $l(e_m \cap X_{w^{(m)}})=2
dist(b_m, H_j(w^{(m)}))$ by the definition of Delaunay. Therefore
by (\ref{3456})
$$\lim_{m}|l(e_m \cap X_{w^{(m)}}) -l(e_m' \cap X_{w^{(m)}})|=0.$$ By
lemma \ref{shortgeo}, since $e_m'$ is a shortest geodesic, $e_m$
is also a shortest geodesic for $m$ large.

To see the clam (\ref{3456}),  recall that a simple geodesic loop
on $(\Sigma, d)$ is a smooth map $\alpha: [0,1] \to \Sigma$ so
that $\alpha(0)=\alpha(1)$, $\alpha|_{(0,1)}$ is a geodesic and
$\alpha|_{[0,1)}$ is injective. 
Now for each $i \leq k$ and for $m$ large, the equidistance curve
$\alpha_{i,j}(m)$ between $H_i(w^{(m)})$ and $H_j(w^{(m)})$ is a
simple geodesic loop in the cusp region  $H_j(s_m(1,1,...,1))$
where $\lim_m s_m =0$.  This is due to the fact that $w_j^{(m)}
\to 0$ and $w_i^{(m)} \to t_i
>0$.  It is well known
that if $\alpha$ is a simple geodesic loop in a cusp region
$H_j(w)$, then the length of $\alpha$ is less than $w_j$.
Therefore,
  $l(\alpha_{i,j}(m)) \leq  s_m$ and $\lim_m l(\alpha_{i,j}(m)) =0$.
By definition, the boundary $\partial_0 R_j(w^{(m)}) \subset
\cup_{i} \alpha_{i,j}(m)$. If $a_m, b_m$ are two edges $\partial_0
R_j(w^{(m)}$, then by definition $|dist(a_m, H_j(w^{(m)}))
-dist(b_m, H_j(w^{(m)}))| \leq \sum_{i=1}^k
l(\alpha_{i,j}(m))$. Therefore (\ref{3456}) follows from $\lim_m
l(\alpha_{i,j}(m)) =0$ . $\square$

\end{proof}

\end{proof}

\end{document}